\newtheorem{theorem}{Theorem}[section]
\newtheorem{lemma}[theorem]{Lemma}
\newtheorem{proposition}[theorem]{Proposition}
\newtheorem{property}[theorem]{Property}
\newtheorem{remark}[theorem]{Remark}
\newenvironment{proof}{\par\noindent{{\bf Proof.}}}{\hfill$\Box$
\medskip}
\title{Stability estimates for  semigroups \\    in the Banach case.\\
}
\author{B.~Helffer.}
\date{\today}
\begin{document}

\bibliographystyle{plain}

\maketitle
\begin{abstract}
The purpose of this paper is to revisit  previous works of the author with J. Sj\"ostrand (2010--2021) proved  in the Hilbert case by considering the Banach case  at the light of a paper by Y.~Latushkin and V.~Yurov (2013).
\end{abstract}
\section{Introduction}\label{int}\setcounter{equation}{0}

Let ${\cal B}$ be a complex  Banach space and let $[0,+\infty [\ni
t\mapsto S(t)\in {\cal L}({\cal B},{\cal B})$ be a strongly continuous
semigroup with $S(0)=I$. 

\par The purpose of this note is to revisit some results of \cite{HelSj} and \cite{HelSj21} which have been  established in the case of a Hilbert space and to consider their extension to the Banach case.
 The idea in the Hilbert space is essentially to use the property of 
  the inhomogeneous equation $(\partial _t-A)u=w$ in exponentially
  weighted spaces which are related via
  Fourier-Laplace transform and then use Plancherel's formula. This approach cannot be used in the Banach case (see \cite{Pe} for a discussion) but one can see in \cite{LY} an alternative approach proposed by Y. Latushkin and V. Yurov which combined with the approach of \cite{HelSj} permits this extension. We will describe how to extend also all the results of \cite{HelSj21}.

\par The main result in \cite{HelSj} was  established in the Hilbertian case:
\begin{theorem}\label{int2}~\\ Let assume that $\mathcal B$ is a Hilbert space.
Let $r(\omega)$ defined by
$$
\frac{1}{r(\omega)}:=\sup_{\Re z \geq \omega} ||(A-z)^{-1}||_{\mathcal L(\mathcal B)}\,.
$$
Let $m(t)\ge \Vert S(t)\Vert_{\mathcal L(\mathcal B)} $ be a continuous positive function.
 Then
for all $t,a,b>0$, such that $t \geq  a+b$, we have
\begin{equation}\label{int.4}  \Vert S(t)\Vert \le \frac{e^{\omega t}} {r(\omega )\Vert
    \frac{1}{m}\Vert_{e^{-\omega \cdot }L^2(0,a)}\Vert
    \frac{1}{m}\Vert_{e^{-\omega \cdot }L^2(0,b)}}.
    \end{equation}
\end{theorem}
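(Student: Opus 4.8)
The plan is to convert the desired pointwise bound on $\Vert S(t)\Vert$ into a resolvent estimate for the operator $\partial_s-A$ in an exponentially weighted $L^2(\mathbb R;{\cal B})$, then to feed in a test function built from the semigroup and a cut-off supported in $(0,a)$, and finally to recover the value of $S(t)u_0$ from an $L^2$-average over a window of length $b$; the constraint $t\ge a+b$ is exactly what makes the two intervals $(0,a)$ and $(t-b,t)$ disjointly available. Throughout we may assume $r(\omega)>0$, since otherwise \eqref{int.4} is trivial; then $\omega$ is not below the exponential growth bound of $S$ (Gearhart--Pr\"uss), and by first enlarging $\omega$ slightly — so that all weighted norms below converge — and letting it decrease back to its value at the end, we may assume finiteness.

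\emph{Step 1: the resolvent estimate.} For $w\in L^2(\mathbb R;{\cal B})$ with compact support in $(0,a)$, set $u(s)=\int_{-\infty}^{s}S(s-\tau)w(\tau)\,d\tau$, the causal solution of $(\partial_s-A)u=w$. Writing $u=e^{\omega\cdot}\tilde u$, $w=e^{\omega\cdot}\tilde w$ one gets $(\partial_s+\omega-A)\tilde u=\tilde w$, that is $\hat{\tilde u}(\xi)=(i\xi+\omega-A)^{-1}\hat{\tilde w}(\xi)$; since $\Re(\omega+i\xi)=\omega$, the operator $(i\xi+\omega-A)^{-1}$ has norm $\le 1/r(\omega)$ for every $\xi$. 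Plancherel's theorem for ${\cal B}$-valued functions — the single place where the Hilbert structure of ${\cal B}$ is used, and precisely the point the Banach case must bypass — then gives
\[
\int_{\mathbb R}e^{-2\omega s}\Vert u(s)\Vert^2\,ds\ \le\ \frac{1}{r(\omega)^2}\int_{\mathbb R}e^{-2\omega s}\Vert w(s)\Vert^2\,ds .
\]

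\emph{Step 2: choice of the cut-off.} Fix $u_0$ with $\Vert u_0\Vert=1$ and a function $\phi$ with $\phi(0)=0$ and $\phi\equiv 1$ on $[a,+\infty)$; apply Step 1 with $w(s)=\phi'(s)S(s)u_0$, supported in $(0,a)$. Using $S(s-\tau)S(\tau)=S(s)$ one checks $u(s)=\phi(s)S(s)u_0$, so $u(s)=S(s)u_0$ for $s\ge a$. On the right-hand side $\Vert w(s)\Vert\le|\phi'(s)|\,m(s)$, and minimizing $\int_0^a e^{-2\omega s}\phi'(s)^2m(s)^2\,ds$ over all $\phi$ with $\int_0^a\phi'=1$ is a Cauchy--Schwarz exercise: pairing $\phi'(s)e^{-\omega s}m(s)$ with $e^{\omega s}/m(s)$ shows the minimum equals $\Vert 1/m\Vert_{e^{-\omega\cdot}L^2(0,a)}^{-2}$, attained for $\phi'(s)\propto e^{2\omega s}/m(s)^2$.

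\emph{Step 3: extracting the pointwise value.} Restrict the left-hand side of the displayed inequality to the window $(t-b,t)$, which is contained in $[a,+\infty)$ because $t\ge a+b$ — this is where the hypothesis enters — so there $u(s)=S(s)u_0$; moreover $S(t)u_0=S(t-s)S(s)u_0$ with $\Vert S(t-s)\Vert\le m(t-s)$ gives $\Vert S(s)u_0\Vert\ge\Vert S(t)u_0\Vert/m(t-s)$. Hence, substituting $\sigma=t-s$,
\[
\int_{\mathbb R}e^{-2\omega s}\Vert u(s)\Vert^2\,ds\ \ge\ \Vert S(t)u_0\Vert^2\int_0^b e^{-2\omega(t-\sigma)}m(\sigma)^{-2}\,d\sigma\ =\ \Vert S(t)u_0\Vert^2\,e^{-2\omega t}\,\Vert 1/m\Vert_{e^{-\omega\cdot}L^2(0,b)}^2 .
\]
Combining Steps 1--3 yields $\Vert S(t)u_0\Vert^2 e^{-2\omega t}\Vert 1/m\Vert_{e^{-\omega\cdot}L^2(0,b)}^2\le r(\omega)^{-2}\Vert 1/m\Vert_{e^{-\omega\cdot}L^2(0,a)}^{-2}$; taking square roots and the supremum over $\Vert u_0\Vert=1$ gives \eqref{int.4}. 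The only genuine obstacle is Step 1: it is immediate here from Plancherel, but it is exactly the estimate whose Banach-space analogue — via the Latushkin--Yurov approach — is the subject of the paper; Steps 2 and 3 are elementary bookkeeping once the role of $t\ge a+b$ is seen.
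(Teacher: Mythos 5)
Your proof is correct. It is essentially the Hilbert-space argument that the present paper only sketches in the introduction and attributes to \cite{HelSj}: your Step 1 is precisely the statement $K_{\omega,2}\le 1/r(\omega)$, i.e.\ the weighted $L^2$ bound for the causal solution operator $\mathcal K^+$ obtained from the resolvent bound via Fourier transform and Plancherel (the paper takes this as the hypothesis defining $K_{\omega,p}$ and, for the Hilbert case, quotes the inequality from \cite{LY}), and your Step 2 is the same optimal-cutoff computation, with the Cauchy--Schwarz equality case giving $\Vert \phi' m\Vert_{e^{\omega\cdot}L^2(0,a)}=\Vert 1/m\Vert_{e^{-\omega\cdot}L^2(0,a)}^{-1}$. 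The one structural difference is in how the window $(t-b,t)$ is exploited: the paper (Section \ref{s2}) introduces a second cutoff $\tilde\chi$, writes $u(t)=-\int_{t-b}^t\tilde\chi'(t-s)S(t-s)\,\mathcal K^+(\chi' u)(s)\,ds$ and optimizes $\tilde\chi$ through the equality case in H\"older, whereas you lower-bound the weighted $L^2$ norm of $u$ on $(t-b,t)$ directly via $\Vert S(s)u_0\Vert\ge \Vert S(t)u_0\Vert/m(t-s)$, which produces the factor $\Vert 1/m\Vert_{e^{-\omega\cdot}L^2(0,b)}$ with no second optimization. At $p=2$ the two devices give the same constant; the paper's two-cutoff arrangement is the one that survives in the $L^p$/Banach setting, while your Plancherel step is exactly the ingredient that fails there, as you correctly point out. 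Your handling of the convergence issue (enlarging $\omega$ using Gearhart--Pr\"uss, then letting it decrease, with $r$ monotone and the weighted norms over the finite intervals $(0,a)$, $(0,b)$ passing to the limit) is adequate, and the case $r(\omega)=0$ is indeed trivial.
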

Here the norms are always the natural ones obtained from ${\cal B}$,
$L^2$, thus for instance 
$\Vert S(t)\Vert= \Vert S(t)\Vert_{{\cal L}({\cal B},{\cal B})}$, if
$u$ is a function on $\mathbb{R}$ with values in $\mathbb{C}$ or in
${\cal B}$, $\Vert u\Vert$  denotes the natural $L^2$ norm. In (\ref{int.4}) we also have the natural norm
in the exponentially weighted space $e^{-\omega \cdot }L^2(0,a)$ and
similarly with $b$ instead of $a$; $\Vert
f\Vert_{e^{-\omega \cdot }L^2(0,a)}=\Vert e^{\omega \cdot }f(\cdot
)\Vert_{L^2(0,a)}$. \\
 The extension proposed by \cite{LY} (with less generality\footnote{The authors consider the case when $m(t):= Le^{\lambda t} $ and play instead with the consequence of this theorem as in \cite{HelSj}.}) can be obtained by introducing    $K_{\omega,p}$ which  is defined by
  \begin{equation}\label{eq:K}
 \frac{1}{\hat r_p(\omega)}:=  K_{\omega,p}:= ||\mathcal K^+||_{\mathcal L (L^p_\omega(\mathbb R_+;X))}<+\infty
   \end{equation}
 where 
\begin{equation}\label{eq:Ka}
(\mathcal K ^+ u)(t)=\int_0^t\, S(t-s) u(s) ds\,,\, \mbox{ for } t\geq 0\,.
\end{equation}
Our first new theorem  reads:
 \begin{theorem}\label{thBanach}
 Suppose that  $\omega \in \Bbb{R}$, $p>1$,  and that $K_{\omega,p}$ is  finite. 
Let $m(t): [0, +\infty[ \to ]0, +\infty[$ be a continuous positive function such that 
\begin{equation}\label{eq:h1}
\|S(t)\| \leq m(t) \mbox{ for all } t \geq 0\,.
\end{equation}
Then for all $t,a,b >0$ such that $t \geq  a+ b  $, 
\begin{equation}\label{int.4aaa}  \Vert S(t)\Vert \leq K_{\omega,p}  \frac{e^{\omega t}}{ \Vert
    \frac{1}{m}\Vert_{e^{-\omega \cdot }L^q(]0,a[)}\Vert
    \frac{1}{m}\Vert_{e^{-\omega \cdot }L^p(]0,b [)} }\,,
    \end{equation}
    where $q$ is such that $\frac 1p + \frac 1q =1$\,.
    \end{theorem}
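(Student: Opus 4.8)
The plan is to transplant the Hilbertian argument of \cite{HelSj} to the Banach setting, using the boundedness of $\mathcal K^+$ in place of the Fourier--Plancherel step. I normalise the weighted norm by $\|v\|_{L^p_\omega}=\big(\int_0^\infty e^{-p\omega s}\|v(s)\|^p\,ds\big)^{1/p}$, so that \eqref{eq:K} reads $\|\mathcal K^+v\|_{L^p_\omega}\le K_{\omega,p}\|v\|_{L^p_\omega}$. Fix $w\in\mathcal B$ with $\|w\|=1$; it suffices to bound $\|S(t)w\|$. I introduce two nonnegative scalar weights $\phi$ on $]0,a[$ and $\psi$ on $]0,b[$, both extended by $0$, to be chosen optimally at the very end.

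First I would feed $u_1(s):=\phi(s)\,S(s)w$ into $\mathcal K^+$. Since $S(\tau-s)S(s)=S(\tau)$ whenever $0\le s\le a\le\tau$, one obtains the explicit identity
\begin{equation*}
(\mathcal K^+u_1)(\tau)=\int_0^a S(\tau-s)\,\phi(s)\,S(s)w\,ds=c_\phi\,S(\tau)w\qquad(\tau\ge a),\qquad c_\phi:=\int_0^a\phi ,
\end{equation*}
while \eqref{eq:h1} gives $\|u_1\|_{L^p_\omega}\le\big(\int_0^a(\phi(s)m(s)e^{-\omega s})^p\,ds\big)^{1/p}$. Put $g:=\mathcal K^+u_1$, so $\|g\|_{L^p_\omega}\le K_{\omega,p}\|u_1\|_{L^p_\omega}$ and $g(\tau)=c_\phi S(\tau)w$ for a.e.\ $\tau\ge a$. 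Next I bring $b$ back in: for $r\in[0,b]$ we have $t-r\ge t-b\ge a$, hence $S(r)g(t-r)=c_\phi S(r)S(t-r)w=c_\phi S(t)w$; integrating against $\psi$, taking norms and using \eqref{eq:h1},
\begin{equation*}
c_\phi\Big(\int_0^b\psi\Big)\,\|S(t)w\|=\Big\|\int_0^b\psi(r)\,S(r)\,g(t-r)\,dr\Big\|\le\int_0^b\psi(r)\,m(r)\,\|g(t-r)\|\,dr .
\end{equation*}
Writing $\psi m\|g(t-r)\|=\big(\psi(r)m(r)e^{\omega(t-r)}\big)\big(\|g(t-r)\|e^{-\omega(t-r)}\big)$ and applying H\"older with exponents $q$ and $p$, the second factor is at most $\big(\int_0^b e^{-p\omega(t-r)}\|g(t-r)\|^p\,dr\big)^{1/p}\le\|g\|_{L^p_\omega}$ (change $\sigma=t-r$, which moves $[t-b,t]$ inside $\mathbb R_+$) and the first is $e^{\omega t}\big(\int_0^b(\psi m e^{-\omega\cdot})^q\big)^{1/q}$. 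Chaining everything,
\begin{equation*}
\|S(t)w\|\le e^{\omega t}\,K_{\omega,p}\cdot\frac{\big(\int_0^b(\psi m e^{-\omega\cdot})^q\big)^{1/q}}{\int_0^b\psi}\cdot\frac{\big(\int_0^a(\phi m e^{-\omega\cdot})^p\big)^{1/p}}{\int_0^a\phi}.
\end{equation*}

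It remains to optimise the two ratios, and here one uses H\"older in the reverse direction: $\int_0^a\phi=\int_0^a(\phi m e^{-\omega\cdot})(e^{\omega\cdot}/m)\le\big(\int_0^a(\phi m e^{-\omega\cdot})^p\big)^{1/p}\,\|\tfrac1m\|_{e^{-\omega\cdot}L^q(]0,a[)}$, with equality at $\phi=(e^{\omega\cdot}/m)^q$, and likewise $\int_0^b\psi\le\big(\int_0^b(\psi m e^{-\omega\cdot})^q\big)^{1/q}\,\|\tfrac1m\|_{e^{-\omega\cdot}L^p(]0,b[)}$, with equality at $\psi=(e^{\omega\cdot}/m)^p$. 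These are legitimate choices because $m$ is continuous and strictly positive on the compact intervals $[0,a]$ and $[0,b]$, so the weighted norms of $1/m$ are finite and nonzero. Substituting and letting $w$ range over the unit ball of $\mathcal B$ yields \eqref{int.4aaa}.

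The only real difficulty I anticipate is organisational rather than analytic: one must split $[0,t]$ so that $\mathcal K^+$ absorbs one block, which is what produces the exponent $p$, while a bare semigroup factor together with H\"older duality absorbs the other, which is what produces $q$, and then check that the two exponents and the two intervals $]0,a[$, $]0,b[$ land in exactly the positions demanded by \eqref{int.4aaa}. The measure-theoretic points --- that the pointwise formula for $\mathcal K^+u_1$ represents its $L^p_\omega$-class, and that $g(t-\cdot)$ is defined a.e.\ on $[0,b]$ --- follow at once from the strong continuity of $s\mapsto S(s)w$, and the hypothesis $p>1$ is used precisely to make the conjugate exponent $q$ finite so that the H\"older steps are available.
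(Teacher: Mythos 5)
Your proof is correct and follows essentially the same route as the paper's: your densities $\phi$ on $]0,a[$ and $\psi$ on $]0,b[$ play exactly the role of $-\chi'$ and $-\tilde\chi'(t-\cdot)$ there, with the same splitting (the operator $\mathcal K^+$ absorbing the $]0,a[$ block in the weighted $L^p$ norm, the bare semigroup bound plus H\"older duality absorbing the $]0,b[$ block in $L^q$) and the same final optimization of both weights via the equality case in H\"older's inequality. The only differences are cosmetic: you use unnormalized nonnegative densities and divide by their masses instead of cutoff functions with $\int|\chi'|=1$.
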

The idea behind this extension by \cite{LY} is to replace the use of the Laplace transform and the assumptions on the resolvent of $A$ by more directly the estimate for $\mathcal K^+$. Hence $1/r(\omega)$ is replaced by $K_{\omega,p}$.\\ 
The proof will be given in Section \ref{s2}.

Another aim in this note is to see if we can also extend to the Banach case the results of \cite{HelSj21}. Some of the steps are independent of the Banach assumption, so we  just explain the modifications to do in the case of a reflexive Banach space.\\ 
 Let $\Phi$ satisfy
\begin{equation}\label{defpropchi}
0\le \Phi \in C^1([0,+\infty [) \mbox{  with } \Phi (0)=0 \mbox{ and }  \Phi (t)>0
\mbox{ for } t>0\,,
\end{equation}
and assume that $\Psi$ has the same properties\footnote{By a density argument we can replace $C^1([0,+\infty [)$ in
(\ref{defpropchi}) by the space of locally Lipschitz functions on
$[0,+\infty [$.}.  For $t>0$, let $\iota_t$
be the reflection with respect to $t/2$: $\iota _tu(s)=u(t-s)$. With
this notation, we have the following theorem:
 \begin{theorem}\label{Th1.2p} We assume that $\mathcal B$ is a complex reflexive Banach space. Under the assumptions of Theorem
   \ref{thBanach},  for any $\Phi$ and $\Psi$ satisfying
   \eqref{defpropchi} and for any $\epsilon_1,\epsilon_2 \in \{-,+\}$, we have 
\begin{equation}\label{LL.5bis}
 || S (t)|| _{\mathcal L(\cal B)}\le  e^{\omega t}\frac{\|(\hat r_p(\omega
  )^p\Phi^p -|\Phi '|^p)^{\frac{1}{p}}_-\,m\|_{e^{\omega \cdot }L^p(]0,t[) } \|(\hat r_p(\omega)^q \Psi
  ^q- |\Psi '|^q)^{\frac{1}{q}}_-\, m\|_{e^{\omega \cdot }L^q(]0,t[) }}
{\int_0^t (\hat r_p(\omega
    )^p\Phi^p -|\Phi '|^p)^{\frac{1}{p}}_{\epsilon_1}(\hat r_p(\omega )^q (\iota_t \Psi)^q-|\iota_t \Psi
    '|^q)^{\frac{1}{q}}_{\epsilon_2} ds}\,.
\end{equation}
\end{theorem}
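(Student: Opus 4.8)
The plan is to combine the semigroup identity $S(t)=S(t-s)S(s)$ with two one‑sided inequalities, one for the forward semigroup $S$ and one for the adjoint semigroup $S^*=(S(t)^*)_{t\ge0}$ on $\mathcal B^*$, glued by H\"older's inequality. Fix $t>0$, $u_0\in\mathcal B$ and $v_0\in\mathcal B^*$. For $0\le s\le t$ one has $\langle v_0,S(t)u_0\rangle=\langle S(t-s)^*v_0,S(s)u_0\rangle$; multiplying by a nonnegative weight $\phi(s)$, integrating over $]0,t[$, estimating $|\langle S(t-s)^*v_0,S(s)u_0\rangle|\le\|S(s)u_0\|\,\|S(t-s)^*v_0\|$ and applying H\"older with exponents $(p,q)$ to any factorization $\phi=\alpha\beta$ with $\alpha,\beta\ge0$ gives
\[
\Big(\int_0^t\phi\,ds\Big)|\langle v_0,S(t)u_0\rangle|\le\Big(\int_0^t\alpha(s)^p\|S(s)u_0\|^p\,ds\Big)^{1/p}\Big(\int_0^t\beta(s)^q\|S(t-s)^*v_0\|^q\,ds\Big)^{1/q}.
\]
I will take $\alpha(s)=e^{-\omega s}\big(\hat r_p(\omega)^p\Phi(s)^p-|\Phi'(s)|^p\big)^{1/p}_{\epsilon_1}$ and $\beta(s)=e^{-\omega(t-s)}\big(\hat r_p(\omega)^q\Psi(t-s)^q-|\Psi'(t-s)|^q\big)^{1/q}_{\epsilon_2}$, so that $\phi=\alpha\beta$ equals $e^{-\omega t}$ times the integrand of the denominator of \eqref{LL.5bis} and $\int_0^t\phi\,ds$ equals $e^{-\omega t}$ times that denominator. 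Taking the supremum over $\|v_0\|\le1$ and then over $\|u_0\|\le1$, using $\|S(t)\|=\sup_{\|u_0\|\le1}\sup_{\|v_0\|\le1}|\langle v_0,S(t)u_0\rangle|$, will give \eqref{LL.5bis} once the two factors on the right are bounded by the two norms in the numerator of \eqref{LL.5bis} times $\|u_0\|$, resp. $\|v_0\|$.

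The first factor is controlled exactly as in the proof of Theorem~\ref{thBanach}. By the semigroup law and $\Phi(0)=0$ one has, for $0\le s\le t$,
\[
\Phi(s)S(s)u_0=\int_0^sS(s-\sigma)\,\Phi'(\sigma)S(\sigma)u_0\,d\sigma=\big(\mathcal K^+[\Phi'S(\cdot)u_0]\big)(s);
\]
since $\mathcal K^+$ is causal, its restriction to $L^p_\omega(]0,t[;\mathcal B)$ has norm $\le K_{\omega,p}=1/\hat r_p(\omega)$, so $\|\Phi S(\cdot)u_0\|_{e^{\omega\cdot}L^p(]0,t[)}\le K_{\omega,p}\|\Phi'S(\cdot)u_0\|_{e^{\omega\cdot}L^p(]0,t[)}$, i.e. $\int_0^te^{-p\omega s}h(s)\|S(s)u_0\|^p\,ds\le0$ with $h:=\hat r_p(\omega)^p\Phi^p-|\Phi'|^p=h_+-h_-$. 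Hence $\int_0^te^{-p\omega s}h_+\|S(s)u_0\|^p\le\int_0^te^{-p\omega s}h_-\|S(s)u_0\|^p$, and bounding $h_-\|S(s)u_0\|^p\le h_-\,m^p\|u_0\|^p$ on the right---and, when $\epsilon_1=-$, using this crude bound on the left instead---yields in both cases
\[
\int_0^te^{-p\omega s}h_{\epsilon_1}(s)\|S(s)u_0\|^p\,ds\le\|u_0\|^p\,\Big\|\big(\hat r_p(\omega)^p\Phi^p-|\Phi'|^p\big)^{1/p}_{-}\,m\Big\|_{e^{\omega\cdot}L^p(]0,t[)}^{p},
\]
which is the first factor above with $\alpha$ as chosen.

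The second factor requires the same inequality for the adjoint semigroup with exponent $q$, and this is where reflexivity enters. Because $\mathcal B$ is reflexive, $S^*$ is strongly continuous on $\mathcal B^*$, so $s\mapsto S(s)^*v_0$ is continuous; and, $\mathcal B^*$ being reflexive it has the Radon--Nikod\'ym property, so the dual of $L^p_\omega(]0,t[;\mathcal B)$ is $L^q_{-\omega}(]0,t[;\mathcal B^*)$ isometrically. A Fubini computation identifies the Banach adjoint of $\mathcal K^+$ on $L^p_\omega(]0,t[;\mathcal B)$ with the map $v\mapsto\big(s\mapsto\int_s^tS(\tau-s)^*v(\tau)\,d\tau\big)$ on $L^q_{-\omega}(]0,t[;\mathcal B^*)$, of norm $\le K_{\omega,p}$; conjugating this by the reflection $\iota_t$, which maps $L^q_{-\omega}(]0,t[)$ onto $L^q_\omega(]0,t[)$ and back with reciprocal norm factors $e^{\mp\omega t}$, turns it exactly into the operator $\mathcal K^+_{S^*}$ defined by \eqref{eq:Ka} with $S$ replaced by $S^*$, whence $\|\mathcal K^+_{S^*}\|_{\mathcal L(L^q_\omega(]0,t[;\mathcal B^*))}\le K_{\omega,p}$. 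Now repeat the previous paragraph with $(S^*,q,\Psi,v_0)$ in place of $(S,p,\Phi,u_0)$, using $\Psi(s)S(s)^*v_0=\big(\mathcal K^+_{S^*}[\Psi'S(\cdot)^*v_0]\big)(s)$ and $\|S(s)^*\|=\|S(s)\|\le m(s)$: with $\widetilde h:=\hat r_p(\omega)^q\Psi^q-|\Psi'|^q$ one obtains
\[
\int_0^te^{-q\omega\sigma}\widetilde h_{\epsilon_2}(\sigma)\|S(\sigma)^*v_0\|^q\,d\sigma\le\|v_0\|^q\,\Big\|\big(\hat r_p(\omega)^q\Psi^q-|\Psi'|^q\big)^{1/q}_{-}\,m\Big\|_{e^{\omega\cdot}L^q(]0,t[)}^{q},
\]
and after the change of variable $\sigma=t-s$ this is exactly the second factor above with $\beta$ as chosen. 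Substituting the two factors into the displayed H\"older inequality and taking the suprema described in the first paragraph gives \eqref{LL.5bis}.

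The step I expect to be the main obstacle is the bound $\|\mathcal K^+_{S^*}\|_{\mathcal L(L^q_\omega(]0,t[;\mathcal B^*))}\le K_{\omega,p}$---equivalently, the duality between $\mathcal K^+$ for $S$ in $L^p_\omega$ and $\mathcal K^+$ for $S^*$ in $L^q_\omega$ on a finite interval. It is precisely here that the reflexivity hypothesis is genuinely used (to make $S^*$ a $C_0$‑semigroup, so that $\mathcal K^+_{S^*}$ and the continuity of $s\mapsto S(s)^*v_0$ make sense, and to identify the dual of the Bochner space $L^p_\omega(]0,t[;\mathcal B)$), and it is the reflection $\iota_t$ forced by this convolution‑type duality that explains why $\iota_t\Psi,\iota_t\Psi'$ appear in the denominator of \eqref{LL.5bis} while $\Psi,\Psi'$ appear un‑reflected in its numerator. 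Everything else is the same weight bookkeeping already present in the Hilbert case of \cite{HelSj21}.
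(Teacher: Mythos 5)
Your proposal is correct and follows essentially the same route as the paper: pairing $S(t)u_0$ against a backward solution of the dual semigroup (the flux identity), the $L^p$ estimate for $\Phi\,S(\cdot)u_0$ coming from the bound on $\mathcal K^+$ (with the trivial variant when $\epsilon_1=-$), the analogous $L^q$ estimate for the adjoint semigroup with the same constant $K_{\omega,p}$ --- your adjoint-plus-reflection computation is exactly the duality fact the paper records as \eqref{eq:3.7} --- and a H\"older gluing of the two. The only presentational difference is that your direct factorization $\phi=\alpha\beta$ inside H\"older replaces the paper's auxiliary mass-one density $M$ and the optimization Lemma \ref{LL1p}, which corresponds to the same optimal choice of $M$.
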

Here for $a\in \mathbb R$, $a_+=\max (a,0)$ and  $a_-= \max (-a,0)$. This theorem was established in \cite{HelSj21} (Theorem 1.6)  in the Hilbert case with $p=2$ and $\hat r_2(\omega)$ replaced by $r(\omega)$. \\
With this generalization, the other sections of \cite{HelSj21} hold in the case $p=2$, in particular all the consequences considered as 
$\epsilon_1 \epsilon_2 =-$ in \cite{HelSj21} (Theorem 1.7, Proposition 1.8, Theorem 1.9). Notice that in the Hilbert case the replacement of $r(\omega)$  by $\hat r_2(\omega)
 = 1 / K_{\omega,2}$ which could appear as stronger but is probably equivalent (see the discussion in \cite{LY}).The case $p\neq 2$ will be further discussed in the last sections, with in particular an extension of Proposition 1.8 (\cite{HelSj21} and an extension of Wei's theorem.\\
 These applications  will be stated  in Theorem \ref{th3.2p}. and in Theorem \ref{prop4.9bis}.\\
  \begin{remark}
 There is a huge litterature on the subject and we refer for example to the recent survey by \cite{Ro} for a description of the different approachs. In particular J. Rozendaal and M. Veraar \cite{RoVe} obtain more general results as here (see for example their Theorem 3.2) albeit with different constants when considering our particular case. The question of the finiteness
  of $K_{\omega,p}$ is also an interesting open question. Notice\footnote{We thank J. Rozendahl for this remark} that the inequality $K_{\omega,p} \leq 1/r(\omega)$ also holds, by application of Proposition 3.7 in \cite{RoVe} when $S$ is a positive semi-group on an $L^p$-space.
 \end{remark}

{\bf Acknowledgements}\\
The author was motivated by  a question of L. Boulton 
 at the Banff conference (2022) and encouraged  later by discussions with Y. Latushkin during the Aspect Conference in Oldenburg (2022) organized by K. Pankrashkin.\\ We thank J. Rozendaal for comments on a previous version of the manuscript and   Y. Latushkin for his careful reading and suggestions of simplification.
 
\section{Proof of Theorem \ref{thBanach} and applications.}\label{s2}
\subsection{Proof of Theorem \ref{thBanach}}
    Following \cite{HelSj} and a simplification proposed by Y. Latushkin, we
   consider  $v\in D(A)$ and  $u(t)=S(t) v$, for solving the Cauchy problem
$$
\begin{array}{l}
(\partial_t -A) u=0\,,\, t\geq 0\,\,,\\
u(0)=v\,.
\end{array}
$$
Assume $t> a+b$, and let   $\chi$, $\tilde \chi$  be decreasing Lipschitz functions on $\mathbb R$, equal to
$1$ on $]-\infty,0]$  and such that ${\rm supp\,}  \chi \in (-\infty,a)$ and ${\rm supp\,}  \tilde \chi \in (-\infty,b)$.
We notice that
\begin{equation}\label{eq:L1}
u(t) = S(t-s) S(s) v= S(t-s) u(s)\,,\, \mbox{ for } t\geq s \geq 0\,.
\end{equation}
Then, using \eqref{eq:L1} between the first line and the second line, we obtain, for $t\geq 0$, 
$$
\begin{array}{ll}
(1- \chi(t) )  u(t) &=- (\int_0^t \chi'(s) ds)\, u(t)\\ 
&=  -\int_0^t S(t-s) \chi'(s) u(s) ds \\
&=- \mathcal K^+ (\chi'(\cdot)u(\cdot)) (t)\,.
\end{array}
$$
On the other hand, introducing $\tilde \chi$, we first write
$$
u(t) = \tilde \chi (0) u(t) = - \big( \int_{t-b}^t \tilde \chi'(t-s) ds\big) u(t)\,.
$$
Proceeding similarly as above we then write (using that $t-b>a$  at the third line)
$$
\begin{array}{ll}
u(t)&=  -  \int_{t-b}^t \tilde \chi'(t-s) S(t-s) u(s) ds  \\
& =  -  \int_{t-b}^t \tilde \chi'(t-s) S(t-s)  u(s) ds \\
& =  -  \int_{t-b}^t \tilde \chi'(t-s) S(t-s) (1-\chi(s)) u(s) ds \\
&=  -  \int_{t-b}^t \tilde \chi'(t-s) S(t-s) \mathcal K^+ (\chi'(\cdot)u(\cdot)) (s) ds \,.
\end{array}
$$
Thus, with in mind the assumption on $||S(t)||$, we get
$$
|| e^{-\omega t} S(t)v || \leq \Big(\int_{t-b}^t  \big(e^{-\omega (t-s)}|\tilde \chi'(t-s)| m(t-s)\big) \big( || \mathcal K^+ (\chi'(\cdot)u(\cdot)) (s)||\big) ds \Big)\,,
$$
and H\"older's inequality yields 
  \begin{equation} \label{eq:banach}
   e^{-\omega t} || S(t)||_{\mathcal L (\mathcal B)}\leq  K_{\omega,p} || m \chi'||_{e^\omega L^p  }  ||e^{-\omega s} m \tilde  \chi'||_{e^\omega L^q  }\,,
\end{equation}
as 
$$
 || \mathcal K^+ (\chi'(\cdot)u(\cdot))||_{e^{\omega \cdot} L^p}\leq K_{\omega,p} ||\chi'(\cdot)S(\cdot) v||_{e^{\omega \cdot} L^p}\
  \leq K_{\omega,p} || \chi' m ||_{e^{\omega \cdot} L^p} ||v||\,.
$$
 This is  the inequality  (3.8) in \cite{LY}  but with the general weight $m(t)$ replacing the particular weight  $L e^{\lambda t}$.
 When $p=2$, \eqref{eq:banach} is just the Banach analog of (4.12) in \cite{HelSj} where $1/r(\omega)$ is replaced by $K_{\omega,2}$.\\
 Following the strategy of \cite{HelSj}  it remains to optimize the right hand side by choosing $\chi$ and $\tilde \chi$ optimally.    We look for $\chi$  such that $\| m
\chi'\|_{e^\omega L^p(0,a)}$
 is as small as possible. By the H\"older inequality, 
\begin{equation}\label{18}
1 = \int_0^a |\chi'(s) | ds \leq \|\chi' m\|_{e^{\omega \cdot}L^p}
\|\frac 1m\|_{e^{-\omega \cdot}L^q(]0,a[)}\,,
\end{equation}
so
\begin{equation}\label{19}
 \|\chi' m\|_{e^{\omega \cdot}L^p}  \geq \frac{1}{\|\frac
   1m\|_{e^{-\omega \cdot}L^q(]0,a[)}}\,.
\end{equation}
As classical, we get equality in \eqref{18} if for some constant $C$,
$$
(|\chi'(s)| m(s) e^{-\omega s})^p = C  \big( \frac{1}{m(s)} e^{\omega s}\big)^q \hbox{
  on }[0,a],
$$
i.e.
$$
\chi'(s) m(s) e^{-\omega s} = - C^{1/p} \Big( \frac{1}{m(s)} e^{\omega s}\Big)^{q/p} \hbox{
  on }[0,a],
$$
where $C$ is determined  by the condition
$$
1 = \int_0^a |\chi'(s) | ds\,.
$$

Doing the same job with $\tilde \chi$, we obtain the theorem.

\subsection{The result of Latushkin-Yurov and extensions}
In \cite{LY}, the authors prove directly the following statement:
\begin{theorem}
Let $\omega, \lambda$, $p>1$ and $L>0$.
Let $\{S(t)\}_{t\geq 0}$ be a strongly continuous semigroup on a Banach space $\mathcal B$. 
If $\omega < \lambda$, $|| S(t)|| \leq L e^{\lambda t} \mbox{ for all } t\geq 0$
 and $K_{\omega,p} < +\infty$\,,
  then 
 $$
 || S(t)|| \leq M e^{\omega t} \mbox{ for all } t\geq 0\,,
 $$
 with 
 $$
 \frac 1p + \frac 1q=1\,,
 $$
 and
 $$
 M= L (1 + 4p^{-1/p} q^{-1/q} L K_{\omega,p} (\lambda -\omega))\,.
 $$
 \end{theorem}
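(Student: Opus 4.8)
The plan is to feed into Theorem \ref{thBanach} the explicit majorant $m(t):=Le^{\lambda t}$, which is admissible since it is continuous, positive, and dominates $\|S(t)\|$ by hypothesis. Put $\mu:=\lambda-\omega>0$. Using the convention $\|f\|_{e^{-\omega\cdot}L^r(]0,c[)}=\|e^{\omega\cdot}f\|_{L^r(]0,c[)}$ one computes directly
\[
\Big\|\tfrac1m\Big\|_{e^{-\omega\cdot}L^q(]0,a[)}=\frac1L\Big(\frac{1-e^{-q\mu a}}{q\mu}\Big)^{1/q},\qquad
\Big\|\tfrac1m\Big\|_{e^{-\omega\cdot}L^p(]0,b[)}=\frac1L\Big(\frac{1-e^{-p\mu b}}{p\mu}\Big)^{1/p}.
\]
The idea is to balance the two exponents by taking $a=t/q$ and $b=t/p$, so that $a+b=t$ (the equality case permitted in Theorem \ref{thBanach}; if one prefers the strict inequality used in its proof, replace $t$ by $t-\delta$ in $a$ and $b$ and let $\delta\downarrow0$) and $q\mu a=p\mu b=\mu t$. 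The product of the two norms then collapses to $(1-e^{-\mu t})\,/\,(L^2\mu\,p^{1/p}q^{1/q})$, and \eqref{int.4aaa} yields, for every $t>0$,
\[
e^{-\omega t}\|S(t)\|\ \le\ \frac{c}{1-e^{-\mu t}},\qquad c:=p^{1/p}q^{1/q}L^2\mu K_{\omega,p}.
\]

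Next I would combine this with the crude estimate $e^{-\omega t}\|S(t)\|\le e^{-\omega t}m(t)=Le^{\mu t}$, which controls small $t$, where the bound above degenerates. Fix $t>0$ and set $M:=L\big(1+4p^{-1/p}q^{-1/q}L\mu K_{\omega,p}\big)$, so that $M-L=4p^{-1/p}q^{-1/q}L^2\mu K_{\omega,p}$. If $Le^{\mu t}\le M$ there is nothing to prove. Otherwise $e^{-\mu t}<L/M$, hence $1-e^{-\mu t}>(M-L)/M$, and therefore $e^{-\omega t}\|S(t)\|\le cM/(M-L)$; since $M-L>0$, it only remains to verify $c\le M-L$, which then gives $e^{-\omega t}\|S(t)\|\le M$ in this case as well. (The case $t=0$ is trivial, as $\|S(0)\|=1\le L\le M$.)

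The single point that is not plain substitution is the inequality $c\le M-L$, i.e. $p^{1/p}q^{1/q}\le 4\,p^{-1/p}q^{-1/q}$, equivalently $p^{1/p}q^{1/q}\le2$; this is exactly the weighted arithmetic--geometric mean inequality $x^{1/p}y^{1/q}\le x/p+y/q$ at $x=p$, $y=q$. I expect this elementary estimate, together with the balancing choice $a=t/q$, $b=t/p$ that removes $a$ and $b$ from the final bound, to be the only non-routine ingredient; everything else is inserting $m(t)=Le^{\lambda t}$ into Theorem \ref{thBanach} and a two-case comparison. (For $p=q=2$ one gets $c=M-L=2L^2\mu K_{\omega,2}$, so the AM--GM step is sharp there.)
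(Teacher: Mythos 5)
Your derivation is correct and follows exactly the route the paper indicates for this statement, namely obtaining it as a corollary of Theorem \ref{thBanach} with $m(t)=Le^{\lambda t}$ (the paper itself only cites Latushkin--Yurov and remarks that this corollary route works, without giving details). Your balancing choice $a=t/q$, $b=t/p$, the comparison with the crude bound $Le^{\mu t}$ for small $t$, and the verification $p^{1/p}q^{1/q}\le 2$ via weighted AM--GM (sharp at $p=q=2$) supply precisely the missing computation and yield the stated constant $M$.
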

 The theorem can also be obtained as in \cite{HelSj} for the case $p=2$ as a corollary of Theorem \ref{thBanach}.

When $p=2$ and $\mathcal B$ is an Hilbert space  it is possible to prove (see \cite{LY})  that
$$
K_{\omega,2}\leq  \sup_{s\in \mathbb R} || R(A,\omega+ is)||_{\mathcal L(\mathcal B)}\,,
$$
where $R(A,\lambda) = (\lambda - A)^{-1}$ and we recover the statement of  \cite{HelSj21}, which was obtained as a consequence of the $L^2$ version of Theorem \ref{thBanach} with $p=2$.
 
 \section{Proof of Theorem \ref{Th1.2p} in the reflexive Banach case.}\label{s3} \setcounter{equation}{0}
\subsection{Flux} We assume that $\mathcal B$ is a reflexive Banach space and we denote by $\mathcal B^*$ its dual\footnote{The more common notation is $\mathcal B'$.}. As before, let $A$ the generator of a strongly continuous semi-group and 
$u(t)\in C^1([0,+\infty [;{\cal B})\cap C^0([0,+\infty [;{\cal
  D}(A))$ solve $(A-\partial _t)u=0$ on $[0,+\infty [$.\\  As $\mathcal B$ is reflexive, we can define  $A^*$ as  the infinitesimal generator of the dual semi-group which is a strongly continuous semi-group on $\mathcal B^*$. Let 
$u^*(t)\in C^1(]-\infty ,T];{\cal B^*})\cap C^0(]-\infty ,T];{\cal
  D}(A^*))$ solve   $(A^*+\partial _t)u^*=0$
 on $]-\infty ,T]$. 
We refer to \cite{AE} (and references therein) for the properties of the  dual semi-group. 
 Then the flux (or
Wronskian) $<u(t),u^*(t)>_{\mathcal B,\mathcal B^*}$ (where the bracket indicates the duality bracket between $\mathcal B$ and $\mathcal B^*$) is constant on $[0,T]$ as can be seen by
computing the derivative with respect to $t$.

\subsection{$L^p$ estimate} 
Write $L^p_\phi (I)=L^p(I;e^{-p\phi }dt)=e^\phi L^p(I)$, $\|u\|_{p,\phi}
=\|u\|_{p,\phi ,I}=\|u\|_{L^p_\phi (I)}$, where $I$ is an interval and our functions take values in ${\cal
  B}$. We assume (see \eqref{eq:K}) that 
$K_{p,\omega} = 1/\hat r_p(\omega) < +\infty$.
\par Consider $(A-\partial _t)u=0$ on $[0,+\infty [$ with
$u\in L^p_{\omega \cdot }([0,+\infty [)$. \\
Let $\Phi$ satisfy (\ref{defpropchi}) and add temporarily the
assumption that $\Phi (s)$ is constant for $s\gg 0$. 
 Then $\Phi u$, $\Phi 'u$ can be viewed as elements of
$L^p_{\omega \cdot }(\mathbb R)$ and from
$$
(A-\partial _t)\Phi u=-\Phi 'u\,,
$$
we get, by the definition of $\hat r_p(\omega)$,  
$$
\| \Phi u\|_{p,\omega \cdot }\le \frac{1}{\hat r_p(\omega )}\|\Phi
'u\|_{p, \omega \cdot }\,.
$$
Taking the power $p$, we get
$$  
\int_{-\infty}^{+\infty} (\hat r_p(\omega )^2|\Phi|^p -|\Phi '|^p) ||u(t)||^p_\mathcal B e^{-p\omega t} dt \leq 0\,,
$$
which  can be rewritten as 
\begin{equation*}
\int_{-\infty}^{+\infty} (\hat r_p(\omega )^2|\Phi|^p -|\Phi '|^p)_+ ||u(t)||^p_\mathcal B e^{-p\omega t} dt \leq  \int_{-\infty}^{+\infty} (\hat r_p(\omega )^p |\Phi|^p -|\Phi '|^p)_- ||u(t)||^p_\mathcal B e^{-p\omega t} dt\,,
\end{equation*}
or finally in. the form
\begin{equation}\label{L2.2p}
\| (\hat r_p(\omega )^p|\Phi|^p -|\Phi '|^p)_+^{1/p }u\|_{p, \omega \cdot }\le 
\| (\hat r_p(\omega )^p|\Phi|^p -|\Phi '|^p)_-^{1/p}u\|_{p,\omega \cdot }.
\end{equation}
Writing $\Phi =e^{\phi }$, $\phi \in C^1(]0,+\infty [)$, $\phi (t)\to
-\infty $ when $t\to 0$, we have
$$
\hat r_p(\omega )^p|\Phi|^p-|\Phi '|^p= (\hat r_p(\omega )^p-|\phi '|^p)e^{p\phi } \,,
$$
and (\ref{L2.2p}) becomes
\begin{equation}\label{L2.4p}
\| \big(\hat r_p(\omega ))^p -|\phi '|^p\big)_+^{1/p}u\|_{p,\omega \cdot -\phi} \le 
\| \big(\hat r_p(\omega )^p -|\phi '|^p\big)_-^{1/p}u\|_{\omega \cdot-\phi }\,.
\end{equation}
 Let $S(t)=e^{tA}$, $t\ge 0$ and let $m(t)>0$ be a continuous function such
that
\begin{equation}\label{L2.5p}
\| S(t)\|\le m(t),\ t\ge 0\,.
\end{equation}
Then we get
\begin{equation}\label{L2.6p}
  \| \big(\hat r_p(\omega)^p-|\phi '|^p\big)_+^{1/p}u\|_{p,\omega \cdot -\phi  }
  \le \| \big(\hat r_p(\omega)^p -|\phi '|^p\big)_-^{1/p}m\|_{p,\omega \cdot -\phi 
  }|u(0)|_{\cal B}\,.
\end{equation}
 Note that we have also trivially
\begin{equation}\label{L2.6bisp}
  \| (\hat r_p(\omega )^p-|\phi '|^p)_-^{1/p}u\|_{p,\omega \cdot -\phi  }
  \le \| \big(\hat r_p(\omega )^p-|\phi '|^p\big)_-^{1/p}m\|_{p,\omega \cdot -\phi 
  }|u(0)|_{\cal B}\,.
\end{equation}

\par  We get the same bound for the forward solution of $A^*-\partial
_t$ and, after changing the orientation of time, for the backward
solution of $A^*+\partial _t=(A-\partial _t)^*$. Then for $u^*(s)$,
solving
$$
(A^*+\partial _s)u^*(s)=0,\ s\le t,
$$
with $u^*(t)$ prescribed in $\mathcal B^*$, we get
\begin{equation*}
  \| \big(\hat r_q^*(\omega )^q-|\iota_t\phi '|^q\big)_+^{1/q}u^*\|_{q,\omega (t-\cdot )-\iota_t \phi }
  \le
  \| \big(\hat r_q^*(\omega )^q-|\iota_t \phi '|^q\big)_-^{1/q}\iota_t m
  \|_{q,\omega (t-\cdot )-\iota_t \phi }\,  |u^*(t)|_{\mathcal B^*}\,,
\end{equation*}
where $\iota_t \phi $ and $\iota_t m $ denote the
compositions of $\phi $ and $m$ respectively with the reflection $\iota_t$  in
$t/2$ so that $$\iota_t m (s)=m(t-s),\ \ \iota_t \phi (s)= \phi (t-s)\,.$$
Here $\hat r_q^*(\omega)$ is associated with the dual semi-group like in \eqref{eq:K}-\eqref{eq:Ka}. By duality, one can 
show in the reflexive case that
\begin{equation}\label{eq:3.7}
\hat r_q^* (\omega) = \hat r_p(\omega)\,.
\end{equation}

 More generally, we can replace $\phi$ by $\psi$ with the same properties (see \eqref{defpropchi})
and consider $ \Psi  = \exp \psi\,.
$

 Note that we have 
\begin{equation}\label{L2.7p}
  \| \big(\hat r_q^*(\omega )^q-|\iota_t \psi '|^q\big)_+^{1/q}u^*\|_{q, \omega (t-\cdot
    ) -\iota_t \psi  }
  \le \| \big(\hat r_q^*(\omega )^q-|\psi'|^q\big)_-^{1/q}m\|_{q,\omega \cdot -\psi 
  }|u^*(t)|_{\cal B^*}.
\end{equation}
and  also trivially
\begin{equation}\label{L2.7bisp}
  \| \big(\hat r_q^*(\omega )^q-|\iota_t \psi '|^q\big)_-^{1/q}u^*\|_{q, \omega (t-\cdot )-\iota_t \psi }
  \le \| \big(\hat r_q^*(\omega )^q-|\psi'|^q\big)_-^{1/q}m\|_{q,\omega \cdot -\psi 
  }|u^*(t)|_{\cal B^*}.
\end{equation}

\subsection{From $L^p$ to $L^\infty $ bounds}\label{LL} 
 In order to estimate
$|u(t)|_{{\cal B}}$ for a given $u(0)$ it suffices to estimate \break 
 $|<u(t),u^*(t)>_{\cal B, \cal B^*}|$ for arbitrary $u^*(t)\in {\cal B}^*$. 
 Extend
$u^*(t)$ to a backward solution $u^*(s)$ of  $(A^*+\partial
_s)u^*(s)=0$, so that
\begin{equation} \label{LL.0p} 
<u(s), u^*(s)>_{\cal B,\cal B^*}= <u(t), u^*(t)>_{\cal B,\cal B^*},\ \forall s\in [0,t]. 
\end{equation}
Let $M=M_t:[0,t]\to [0,+\infty [$ have mass 1:
\begin{equation}\label{LL.1p}
\int_0^t M(s)ds=1\,.
\end{equation}
Then, using \eqref{LL.0p},
\begin{multline}\label{LL.1.5p}  
 | <u(t), u^*(t)>_{\cal B,\cal B^*}| =\left| \int_0^t M(s) | <u(s), u^*(s)>_{\cal B,\cal B^*}|   ds \right| \\
  \le \int_0^t M(s) |u(s)|_{\cal B}|u^*(s)|_{\cal B^*} ds. \end{multline}
 Let $\epsilon _1,\epsilon_2 \in \{-,+\}$.  Assume that
\begin{equation}\label{LL.2p}
  \mathrm{supp\,}M\subset \{ s; \epsilon_1(\hat r_p(\omega)^p-|\phi '(s)|^p) > 0,\
\epsilon_2(  \hat r_q^*(\omega)^2-\iota_t|\psi '(s)|^q) > 0  \}. 
\end{equation}

Then multiplying and dividing with suitable factors in the last member
of (\ref{LL.1.5p}), we get (in the reflexive case) 
 
\begin{equation*} 
\begin{split}
| <u(t), u^*(t)>_{\cal B,\cal B^*}|& \le e^{\omega t}\int_0^t
  \frac{M(s)e^{-\phi (s)-\iota_t \psi (s)}} {(\hat r_p(\omega)^p-|\phi
    '(s)|^p)^{\frac{1}{p}}_{\epsilon_1}(\hat r_q^*(\omega)^q-|\iota_t \psi
    '(s)|^q)^{\frac{1}{q}}_{\epsilon_2}}\times\\ 
    & \qquad \times  e^{\phi (s)-\omega s} (\hat r_p(\omega
  )^p-|\phi '(s)|^p)^{\frac{1}{p}}_{\epsilon_1}|u(s)|_{\cal B}\times \\ &\qquad \times  e^{\iota_t \psi
    (s)-\omega (t-s)}(\hat r_q^*(\omega)^q-|\iota_t \psi
  '(s)|^q)^{\frac{1}{q}}_{\epsilon_2}|u^*(s)|_{\cal
    B^*}ds\\
     & \le
  e^{\omega t}\sup_{[0,t]} \frac{Me^{-\phi-\iota_t \psi }} {(\hat r_p(\omega
    )^p-|\phi '|^p)^{\frac{1}{p}}_{\epsilon_1}(\hat r_q^*(\omega)^q-|\iota_t \psi
    '|^q)^{\frac{1}{q}}_{\epsilon_2}}\times\\ & \qquad \times \|(\hat r_p(\omega
  )^p-|\phi '|^p)^{\frac{1}{p}}_{\epsilon_1}u\|_{p, \omega \cdot -\phi }\|(\hat r_q^*(\omega)^q- |\iota_t \psi
  '|^q)_{\epsilon_2}^{\frac{1}{q}}u^*\|_{\omega (q, t-\cdot )-\iota_t \psi }.
  \end{split}
  \end{equation*}

Using (\ref{L2.6p}), (\ref{L2.7p}) when $\epsilon_j=+$ or   (\ref{L2.6bisp}), (\ref{L2.7bisp}) when $\epsilon_j=-$,   we get
\begin{multline}  \label{eq:estimp=2p}
| <u(t), u^*(t)>_{\cal B,\cal B^*}|  \le  e^{\omega t}\sup_{[0,t]} \frac{Me^{-\phi- \iota_t \psi }} {(\hat r_p(\omega)^p-|\phi '|^p)^{\frac{1}{p}}_{\epsilon_1}(\hat r_q^*(\omega)^q-|\iota_t \psi
    '|^q)^{\frac{1}{q}}_{\epsilon_2}}\times\\   \times 
  \|(\hat r_p(\omega
  )^p-|\phi '|^p)^{\frac{1}{p}}_-m\|_{p,\omega \cdot -\phi  }\|(\hat r_q^*(\omega)^q-|\psi
  '|^q)_-^{\frac{1}{q}}\, m\|_{q,\omega \cdot
-\psi     }|u(0)|_{\cal B}\, |u^*(t)|_{\cal B^*}.
 \end{multline}
  
This estimate holding for any $u^*(t)$, we get
\begin{equation}\label{LL.3p}
\begin{split}
  |u(t)|_{\cal B} \le  e^{\omega t}\sup_{[0,t]} \frac{Me^{-\phi- \iota_t \psi }} {(\hat r_p(\omega)^p-|\phi '|^p)^{\frac{1}{p}}_{\epsilon_1}(\hat r_q^*(\omega)^q-|\iota_t \psi
    '|^q)^{\frac{1}{q}}_{\epsilon_2}}\times\\   \times 
  \|(\hat r_p(\omega
  )^p-|\phi '|^p)^{\frac{1}{p}}_-m\|_{p,\omega \cdot -\phi  }\|(\hat r_q^*(\omega)^q-|\psi
  '|^q)_-^{\frac{1}{q}}\, m\|_{q,\omega \cdot
-\psi     }|u(0)|_{\cal B}\,.
\end{split}
\end{equation} 

 In order to optimize the choice of $M$, we let $0\not\equiv F\in
 C([0,t];[0,+\infty [)$ and study
\begin{equation}\label{LL.4p}
\inf_{0\le M\in C([0,t]),\atop \, \int Mds=1}\sup_s \frac{M(s)}{F(s)}.
\end{equation}
We first notice that
$$
1=\int Mds=\int \frac{M}{F}Fds\le \left(\sup_s \frac{M}{F} \right)\int Fds\,,
$$
and hence
the quantity (\ref{LL.4p}) is $\ge 1/\int Fds$.\\ Choosing $M=\theta F$ 
with $\theta =1/\int F(s)\,ds$, we get equality.

\begin{lemma}\label{LL1p} For any continuous function $F\geq 0$, non identically $0$,
$$
\inf_{0\le M\in C([0,t]),\atop \, \int M(s)\,ds=1}\  \left(\sup_s \frac{M}{F} \right)  = 1/\int Fds\,.
$$
\end{lemma}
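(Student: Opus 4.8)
The plan is to prove the two inequalities separately; the content is exactly the heuristic already displayed before the statement, and the only care needed is at points where $F$ vanishes. Since $F$ is continuous on the compact interval $[0,t]$ and $F\not\equiv 0$, the number $\int_0^t F(s)\,ds$ is a well-defined element of $]0,+\infty[$, so $1/\int_0^t F\,ds$ makes sense.

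First I would establish the lower bound, i.e.\ that $\sup_s M/F\ge 1/\int_0^t F\,ds$ for every admissible $M$ (meaning $0\le M\in C([0,t])$ with $\int_0^t M\,ds=1$). If there is a point $s_0$ with $F(s_0)=0$ but $M(s_0)>0$, then by continuity $M/F\to+\infty$ near $s_0$, so $\sup_s M(s)/F(s)=+\infty$ and the inequality is trivial. Otherwise $M$ vanishes on $\{F=0\}$, the product $(M/F)\,F$ is unambiguously $M$ (with value $0$ where $F=0$), and
$$
1=\int_0^t M(s)\,ds=\int_0^t \frac{M(s)}{F(s)}\,F(s)\,ds\le\Big(\sup_{s}\frac{M(s)}{F(s)}\Big)\int_0^t F(s)\,ds ,
$$
the $\sup$ being taken over $\{F>0\}$. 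Dividing and then taking the infimum over $M$ gives the desired lower bound for the quantity in \eqref{LL.4p}.

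Then I would exhibit a minimizer to obtain the reverse inequality. Put $\theta:=1/\int_0^t F(s)\,ds\in\,]0,+\infty[$ and $M:=\theta F$. Then $M\in C([0,t])$, $M\ge 0$, and $\int_0^t M\,ds=\theta\int_0^t F\,ds=1$, so $M$ is admissible; moreover $M/F\equiv\theta$ on $\{F>0\}$ (and $M=0$ on $\{F=0\}$), whence $\sup_s M(s)/F(s)=\theta$. Therefore the infimum is $\le\theta=1/\int_0^t F\,ds$, and combining with the previous step yields equality, with the infimum attained.

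The argument has essentially no hard part — it is just the trivial bound $\int gh\le\|g\|_\infty\int h$ for $h\ge 0$ together with its equality case — so the only thing to watch is the convention for $\sup_s M/F$ at zeros of $F$, which is why I split off the case where $M$ fails to vanish there. In the application of the lemma (with $F$ built from the quantities appearing in \eqref{LL.3p}) one restricts attention to an interval on which $F>0$, so this subtlety does not even arise there.
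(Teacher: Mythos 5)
Your proof is correct and follows essentially the same argument the paper gives just before the statement of Lemma \ref{LL1p}: the lower bound $1=\int (M/F)F\,ds\le(\sup_s M/F)\int F\,ds$, then the choice $M=\theta F$ with $\theta=1/\int F\,ds$ to attain equality. Your only addition is the explicit handling of the zeros of $F$, which the paper leaves implicit but which does not change the route.
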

Applying the lemma to the
supremum in (\ref{LL.3p})  with  
$$F= e^{\phi+\iota_t \psi}\, (\hat r_p(\omega
    )^p-|\phi '|^p)^{\frac{1}{p}}_{\epsilon_1}(\hat r_q(\omega)^q-|\iota_t \psi
    '|^q)^{\frac{1}{q}}_{\epsilon_2},$$ 
we get 
\begin{equation}\label{LL.5ap}
 |u(t)|_{\cal B} \le  e^{\omega t}\frac{\|(\hat r_p(\omega
  )^p-|\phi '|^p)^{\frac{1}{p}}_-\, m\|_{p,\omega \cdot -\phi  } \|(\hat r_q^*(\omega
  )^q- |\psi '|^q)^{\frac{1}{q}}_-\, m\|_{ q,\omega \cdot -\psi } }
{\int_0^t e^{\phi + \iota_t \psi }(\hat r_p(\omega
    )^p-|\phi '|^p)^{\frac{1}{p}}_{\epsilon_1}(\hat r_q^*(\omega)^q-|\iota_t\psi
    '|^q)^{\frac{1}{q}}_{\epsilon_2} ds}
  |u(0)|_{\cal B}.
\end{equation}
Since $u(0)$ is arbitrary, and noting \eqref{eq:3.7}
we get Theorem \ref{Th1.2p}.\\~\\

\section{Consequences of Theorem \ref{Th1.2p}}
\subsection{Main proposition}
An important step is to prove (we assume $\omega=0$, $\hat r_p(0)=1$) as a consequence of Theorem \ref{Th1.2p} with $\epsilon_1 =+$ and $\epsilon_2=-$, the following key proposition: 
 \begin{proposition}\label{propminmaxp} Assume that $\omega =0$,
   $\hat r_p(\omega )=1$.
  Let $a, b$ positive. Then for  $t\geq a+b$,  
 \begin{equation}\label{eq:optprob}
 || S(t)|| \leq \exp -( t -a - b ) \,\frac{ \left(\inf_u \int_0^a
     m(s)^p (|u'(s)|^p-u^p(s))_+ ds\right)^{1/p}  }{  \left( \sup_\theta \int_0^{b } \frac{1}{m^p}  (\theta(s)^p-|\theta'(s)|^p) \,ds \right)^{1/p} } \,,
 \end{equation}
 where
 \begin{itemize}
 \item $u\in W^{1,p}(]0,a[)$ satisfies $u(0)=0$, $u(a) =1$\,;
 \item $\theta \in W^{1,p}((]0,b[)$ satisfies $\theta(b) = 1$ and $ |\theta '|\le \theta$\,.
 \end{itemize}
 \end{proposition}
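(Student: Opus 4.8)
The plan is to specialize Theorem \ref{Th1.2p} to the regime $\omega = 0$, $\hat r_p(0)=1$, with the sign choice $\epsilon_1 = +$ and $\epsilon_2 = -$, and to make the two functions $\Phi$, $\Psi$ supported on the relevant subintervals so that the bound \eqref{LL.5bis} collapses to a quotient of a one-dimensional infimum by a one-dimensional supremum. First I would choose $\Phi$ on $[0,a]$ so that $|\Phi'| \geq \Phi$ there (this is exactly the condition that forces $(\hat r_p(0)^p \Phi^p - |\Phi'|^p)_+ \equiv 0$, so that with $\epsilon_1=+$ the corresponding factor in the denominator of \eqref{LL.5bis} vanishes on $[0,a]$ and only the ``$-$'' part contributes), and extend $\Phi$ to be constant (or to decay appropriately) on $[a,t]$; dually, I would choose $\Psi$ so that $\iota_t\Psi$ lives on $[t-b,t]$, with $|\Psi'| \leq \Psi$ there, which is the condition making $(\hat r_q(0)^q \Psi^q - |\Psi'|^q)_- \equiv 0$ on that interval, so that with $\epsilon_2=-$ the corresponding factor is $(\Psi^q - |\Psi'|^q)^{1/q}_-$, again selecting only one sign. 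With $\omega = 0$ and $\hat r_p = 1$ the weights $e^{\omega\cdot}$ disappear and \eqref{LL.5bis} becomes
\[
\|S(t)\| \leq \frac{\|(\Phi^p - |\Phi'|^p)^{1/p}_-\, m\|_{L^p(]0,t[)} \,\|(\Psi^q - |\Psi'|^q)^{1/q}_-\, m\|_{L^q(]0,t[)}}{\int_0^t (\Phi^p - |\Phi'|^p)^{1/p}_{+}(\iota_t\Psi^q - |\iota_t\Psi'|^q)^{1/q}_{-}\, ds}\,,
\]
and the support choices localize the numerator norms to $]0,a[$ and $]t-b,t[$ respectively and the denominator integral to $]t-b,a[$ — wait, more carefully to the overlap, which after the substitution forces the integrand to be a product of a function of $s$ on $[0,a]$ with a reflected function on $[t-b,t]$; since $t \geq a+b$ these intervals are disjoint unless we are at $t = a+b$, so the correct reading is that one writes $\Phi$ to have $|\Phi'| < \Phi$ somewhere to get a nonzero ``$+$'' part. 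Let me restate: the right localization is $\Phi$ with $(\Phi^p-|\Phi'|^p)_+$ supported where we want the denominator to live.

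The cleaner route, which I would actually follow, is to substitute $\Phi = e^{\phi}$, $\Psi = e^{\psi}$ as in Section \ref{s3} and track the quantities $(\hat r_p^p - |\phi'|^p)_{\pm}$. Taking $\phi$ with $|\phi'| > 1$ on $]0,a[$ and $\phi' \equiv$ (a value making $|\phi'|<1$, in fact $\phi'$ small) beyond $a$ is contradictory with monotonicity, so instead the standard trick from \cite{HelSj21} is: pick $\phi$ so that $(\hat r_p^p - |\phi'|^p)_+$ is supported in $]0,a[$ — i.e. $|\phi'| < 1$ on $]0,a[$ and $|\phi'| \geq 1$ afterward — and dually pick $\psi$ so that $(\hat r_q^q - |\iota_t\psi'|^q)_-$ is supported in $]t-b,t[$. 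Then in the denominator of \eqref{LL.5ap} the integrand is supported in $]0,a[ \cap\, ]t-b,t[$; but more to the point the numerator $\|(\hat r_p^p - |\phi'|^p)^{1/p}_- m\|_{p}$ is then an integral only over $]a, t[$ (or wherever $|\phi'|\geq 1$), and after optimizing over the shape of $\phi$ on that tail and reparametrizing, this produces the factor $\big(\sup_\theta \int_0^b m^{-p}(\theta^p - |\theta'|^p)\,ds\big)^{-1/p}$ in the form stated, while the denominator integral produces $\exp(-(t-a-b))$ together with $\big(\inf_u \int_0^a m^p(|u'|^p - u^p)_+\,ds\big)^{1/p}$. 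The exponential factor $e^{-(t-a-b)}$ arises precisely because on the ``middle'' interval $[a, t-b]$ one must take $|\phi'| = 1$ (the borderline value given $\hat r_p = 1$), contributing $e^{\phi}$ growth/decay of rate exactly $1$ over a length $t - a - b$.

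Concretely the steps are: (1) invoke Theorem \ref{Th1.2p} with $\omega=0$, $\hat r_p(0)=1$, $\epsilon_1=+$, $\epsilon_2=-$; (2) reparametrize $\Phi$ on $]0,a[$ by setting $u = \Phi/\Phi(a)$ so that the constraint ``$(\Phi^p-|\Phi'|^p)_+$ supported there'' becomes $|u'| > u$, with $u(0)=0$ (forced by $\Phi(0)=0$) and $u(a)=1$; (3) reparametrize $\iota_t\Psi$ on $]t-b,t[$ by $\theta$ with $\theta(b)=1$ and $|\theta'|\leq\theta$; (4) on the complementary interval choose the exponents to be exactly critical ($|\phi'|=1$, $|\psi'|=1$), which makes the corresponding ``$_+$'' and ``$_-$'' parts vanish and contributes only the $e^{-(t-a-b)}$ factor; (5) recognize the three pieces of \eqref{LL.5ap} as the infimum, the supremum, and the exponential, then take infimum over admissible $u$ and supremum over admissible $\theta$ to get the sharp constant. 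The main obstacle I anticipate is bookkeeping the boundary matching between the three subintervals $]0,a[$, $]a,t-b[$, $]t-b,t[$ so that $\phi$ and $\psi$ remain $C^1$ (or locally Lipschitz, which suffices by the footnote to \eqref{defpropchi}) and so that the ``$+$'' and ``$-$'' parts genuinely vanish where claimed; a secondary subtlety is justifying that optimizing the full functional \eqref{LL.5ap} decouples into the separate one-dimensional variational problems for $u$ and for $\theta$ — this works because the numerator and denominator involve $\phi$ and $\psi$ through disjointly-supported pieces, but it should be checked that no cross term survives.
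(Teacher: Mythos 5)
Your overall strategy (specialize Theorem \ref{Th1.2p} to $\omega=0$, $\hat r_p(0)=1$, $\epsilon_1=+$, $\epsilon_2=-$, and choose $\Phi,\Psi$ piecewise with a critical middle piece producing $e^{-(t-a-b)}$) is the right starting point, but the concrete construction you settle on is localized the wrong way round and, as written, collapses. In the paper's proof $\Phi$ carries \emph{both} test functions: $\Phi=e^a u$ on $]0,a]$ with $u\in\mathcal H^{a,p}$ (so $u'\ge u$ and the minus part, which feeds the first numerator norm of \eqref{LL.5ter}, lives on $]0,a[$), $\Phi(s)=e^s$ on $[a,t-b]$, and $\Phi(s)=e^{t-b}\theta(t-s)$ with $|\theta'|\le\theta$ on $[t-b,t]$, so that the plus part — the one selected by $\epsilon_1=+$ in the denominator — lives on $[t-b,t]$. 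Your restated choice instead puts $(\hat r_p^p-|\phi'|^p)_+$ on $]0,a[$ and $(\hat r_q^q-|\iota_t\psi'|^q)_-$ on $]t-b,t[$, so the denominator of \eqref{LL.5ap} is an integral over $]0,a[\,\cap\,]t-b,t[$, which is empty as soon as $t>a+b$; you notice this tension in your first paragraph but never resolve it. The attribution of factors is also inverted: in \eqref{eq:optprob} the $\inf_u$ term (weighted by $m^p$) comes from the numerator norm $\|(\Phi^p-|\Phi'|^p)_-^{1/p}m\|_{L^p}$ (see \eqref{eq:Iphi}), not from the denominator of \eqref{LL.5ap}, which contains no $m$ and cannot produce it; conversely a numerator norm carrying $m$ to a positive power cannot by itself yield the $\sup_\theta\int_0^b m^{-p}(\theta^p-|\theta'|^p)\,ds$ factor.

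The genuinely missing idea is the auxiliary third function and the optimization that eliminates it. The paper takes $\Psi=e^b v$ on $]0,b[$ with $v\in\mathcal H^{b,q}$, $\Psi=e^s$ in the middle and constant at the end, so that after reflection the denominator couples $\theta$ and $v$ on the \emph{same} interval $[t-b,t]$ (computation \eqref{eq:Ichi}), while the second numerator factor is the $m^q$-weighted integral of $|v'|^q-v^q$ (see \eqref{eq:Ipsi}); this yields the three-function bound \eqref{LL.5z}. The passage from \eqref{LL.5z} to the two decoupled variational problems in \eqref{eq:optprob} is exactly Lemma \ref{prop3.12}: for fixed $\theta$ one optimizes over $v$, the lower bound being H\"older and — the direction actually needed for the proposition — the upper bound being realized by an explicit $v$ obtained by solving $w'=(w^q+h^p m^{-pq})^{1/q}$, $w(0)=0$ (Cauchy--Lipschitz) and normalizing by $w(b)$. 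This is the only mechanism that produces the $1/m^p$ weight in the $\sup_\theta$ term; your hoped-for decoupling (``no cross term survives'') is not a bookkeeping check but is false without this step, since $v$ and $\theta$ genuinely interact in the denominator. As written, the proposal therefore does not reach \eqref{eq:optprob}. (A smaller point: the proposition's infimum is over all $u\in W^{1,p}$ with $u(0)=0$, $u(a)=1$, while the construction uses $u\in\mathcal H^{a,p}$; the paper handles this by the reduction quoted from \cite{HelSj21}, which your argument would also need.)
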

 The analysis of the minimizers of 
 \begin{equation}\label{eq:defIp}
 I_{inf,p} := \inf_u \int_0^a
     m(s)^p (|u'(s)|^p-u^p(s))_+ ds
     \end{equation}
      and the maximizers of  
      \begin{equation}
      \label{eq:defJp}
      J_{sup,p}:=\sup_\theta \int_0^{b } \frac{1}{m^p}  (\theta(s)^p-|\theta'(s)|^p) \,ds  
      \end{equation}
       is close to what was done in Section 3 of \cite{HelSj21} and we will sketch what has been modified in Subsection \ref{ss4.3}.
 \subsection{From Proposition \ref{propminmaxp} to Theorem \ref{thBanach}}
 This proposition implies rather directly Theorem \ref{thBanach} in the
 following way. We first observe the trivial lower bound (take
 $\theta(s)=1$)
  \begin{equation}\label{eq:gp2} \sup_\theta \int_0^{b } \frac{1}{m^p}  (\theta(s)^p-|\theta'(s)|^p) \,ds \geq  \int_0^{b } \frac{1}{m^p} ds\,.
  \end{equation}
 A more tricky argument based on the equality case in H\"older's
  inequality\footnote{In \cite{HelSj21} we were using instead Cauchy-Schwarz} gives
\begin{equation}\label{eq:gp3}
 \inf_u \int_0^a m(s)^p (u'(s)^p-u(s)^p)_+ ds  \leq  \inf_u \int_0^a m(s)^p u'(s)^p ds  \leq \Big(1/  \int_0^{a} \frac{1}{m^q} ds\Big) ^{p/q}\,,
 \end{equation}
 More precisely we start from the upper bound 
   $$\int_0^a  (|u'|^p -u^p)_+ m(s)^p ds \leq \int_0^a | u'(s)|^p  m(s)^p \, ds   $$ 
   and minimize the right hand side. \\
   Observing that
   \[
   \begin{split}
   1&=u(a)  = \int_0^a u' (s) \, ds 
   = \int_0^a u' (s)m(s)\, m(s)^{-1}  \, ds\\ &  \leq   \left(\int_0^a (u' (s) m(s)^p ds\right)^{\frac 1p} \,\, \left(\int_0^a \frac{1}{m(s)^q} ds\right)^{\frac 1q}\,,
   \end{split}
   \]
   we look for a $u$ for which we have equality.\\
   By a standard  criterion for the optimality of the H\"older inequality, this is the case if,  for some constant $C>0$,   $$(|u' (s)| m(s))^p =\frac{C}{m(s)^q }\,,$$
   or
  $$
  |u'|=  C^{1/p}   \frac{1}{m^{q} }\,.
  $$
   Hence, we choose 
   $$
   u (s) =\hat C   \int_0^s \frac{1}{m(\tau)^{q} }\, d\tau\,,
   $$
   where the choice of $\hat C$ is determined by imposing $u(a)=1$.
  We obtain
  \begin{lemma}\label{prop3.6}
  For any $a >0$, 
   \begin{equation}
   \inf_{\{u\in W^{1,p}(]0,a[), u(0)=0, u(a)=1\}} \int
_0^a (|u'|^p-u^p)_+m^p\, ds \leq   \left(\int_0^a \frac{1}{m(s)^q}
  ds\right)^{-p/q}\,.
   \end{equation}
   \end{lemma}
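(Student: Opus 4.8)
The plan is to throw away the positive-part truncation and reduce to a pure $p$-Dirichlet minimization, which is then settled by the equality case of Hölder's inequality applied to the boundary constraint $u(a)=1$.

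First I would use the pointwise inequality $(|u'(s)|^p-u(s)^p)_+\le |u'(s)|^p$, valid for all $s$, to get
\[
\inf_{u}\int_0^a(|u'|^p-u^p)_+\,m^p\,ds\ \le\ \inf_{u}\int_0^a |u'|^p\,m^p\,ds ,
\]
the infima being over $u\in W^{1,p}(]0,a[)$ with $u(0)=0$, $u(a)=1$; so it suffices to produce one admissible $u$ whose Dirichlet-type energy $\int_0^a|u'|^pm^p\,ds$ is at most $\big(\int_0^a m^{-q}\,ds\big)^{-p/q}$. Note that $\int_0^a m^{-q}\,ds<+\infty$ because $m$ is continuous and strictly positive on the compact interval $[0,a]$, so the right-hand side is a finite positive number.

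Second, for any admissible $u$ I would write $1=u(a)-u(0)=\int_0^a u'(s)\,ds=\int_0^a\big(u'(s)m(s)\big)\,m(s)^{-1}\,ds$ and apply Hölder with conjugate exponents $p$ and $q$, obtaining $1\le\|u'm\|_{L^p(0,a)}\,\|1/m\|_{L^q(0,a)}$, i.e. $\int_0^a|u'|^pm^p\,ds\ge\big(\int_0^a m^{-q}\,ds\big)^{-p/q}$ for every admissible $u$. Then I would look for the $u$ saturating this bound: equality in Hölder forces $(|u'|m)^p$ proportional to $m^{-q}$, hence $|u'|=C^{1/p}m^{-q}$ (using $p+q=pq$, so that $m^{-(p+q)/p}=m^{-q}$), which leads to the candidate $u(s)=\hat C\int_0^s m(\tau)^{-q}\,d\tau$ with $\hat C=\big(\int_0^a m^{-q}\,ds\big)^{-1}$ chosen so that $u(a)=1$. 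A direct computation then gives $\int_0^a|u'|^pm^p\,ds=\hat C^p\int_0^a m^{-pq}m^p\,ds=\hat C^p\int_0^a m^{-q}\,ds=\big(\int_0^a m^{-q}\,ds\big)^{-p/q}$.

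Finally I would assemble the three steps: plugging this particular $u$ into the truncated functional and using the pointwise bound of Step 1 yields $\int_0^a(|u'|^p-u^p)_+m^p\,ds\le\int_0^a|u'|^pm^p\,ds=\big(\int_0^a m^{-q}\,ds\big)^{-p/q}$, which is exactly the claimed upper bound on the infimum. The only point requiring a little care — the ``main obstacle'', such as it is — is the last part of Step 2: one must check that the function solving the pointwise proportionality relation is genuinely admissible, i.e. nonnegative, in $W^{1,p}(]0,a[)$, and with the correct boundary values $u(0)=0$, $u(a)=1$; this is immediate from the explicit formula and the finiteness of $\int_0^a m^{-q}\,ds$, and everything else is routine.
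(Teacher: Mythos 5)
Your proposal is correct and follows essentially the same route as the paper: drop the positive part to reduce to minimizing $\int_0^a |u'|^p m^p\,ds$, apply H\"older to the constraint $1=u(a)=\int_0^a u'\,ds$, and take the explicit optimizer $u(s)=\hat C\int_0^s m(\tau)^{-q}d\tau$ saturating the inequality. Your added checks (admissibility of $u$ and the exponent arithmetic $p(q-1)=q$, $1-p=-p/q$) are consistent with, and slightly more detailed than, the paper's argument.
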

Note here that we have no condition on $a>0\,$.
\begin{remark}
Nevertheless, one can observe that this implication holds only under the additional condition that $\mathcal B$ is reflexive.  This was not needed in the direct proof of Theorem \ref{thBanach} given in Section \ref{s2}.
\end{remark}
\subsection{Proof of Proposition \ref{propminmaxp}}\label{ss4.3}
We now  assume $\omega=0$ and $ \hat r_p(0)=1$.
In this case, \eqref{LL.5bis} takes the form
\begin{equation}\label{LL.5ter}
 || S (t)|| _{\mathcal L(\cal B)}\le  e^{\omega t}\frac{\|(\Phi^p -|\Phi '|^p)^{\frac{1}{p}}_-\,m\|_{L^p([0,t[) } \|( \Psi
  ^q- |\Psi '|^q)^{\frac{1}{q}}_-\, m\|_{ L^q([0,t[) }}
{\int_0^t (\Phi^p -|\Phi '|^p)^{\frac{1}{p}}_{+}(\iota_t \Psi^q-|\iota_t \Psi
    '|^q)^{\frac{1}{q}}_{-} ds}\,.
\end{equation}
Replacing $(\Phi, \Psi)$ by $(\lambda \Phi,\mu \Psi)$ give for any $(\lambda,\mu)\in( \mathbb R\setminus \{0\})^2$ does not change the right hand side. Hence we may  choose a suitable normalization without loss of generality. We also choose $\Phi$ and $\Psi$ to be piecewise $C^1([0,t])$.\\
If $0\le \sigma <\tau <+ \infty $, $p>1$,  and $S,T\in \mathbb R$ we put
\begin{equation}\label{red.0.5}W_{S,T}^{1,p}(]\sigma ,\tau [)=\{ u\in W^{1,p}(]\sigma ,\tau [);\, u(\sigma
)=S,\ u(\tau )=T\}\,,\end{equation}
where $W^{1,p}$ denotes the classical Sobolev space associated with $L^p$.\\
Here and in the following all functions are
assumed to be real-valued unless stated otherwise.
As in \cite{HelSj21}  we can here replace $W_{0,1}^{1,p}$ by a subspace that
allows to avoid the use of positive parts. Put
\begin{equation}\label{red.2}
{\cal H}^{a,p}={\cal W}_{0,a}^{0,1,p}\,,
\end{equation}
where for $\sigma ,\tau ,S,T$ as above,
\begin{equation}\label{red.3}
{\cal W}_{\sigma ,\tau }^{S,T,p}=\{u\in W_{S,T}^{1,p}(]\sigma ,\tau [); 0\le
u\le u'\}.
\end{equation}
\begin{equation}\label{red:3ap}
{\cal G}^{b,p}=\{\theta \in W^{1,p}(]0,b[);\, |\theta '|\le \theta,\   \theta
(b)=1 \}\,. 
\end{equation}
Given some $t>a + b $, we now give the conditions satisfied by $\Phi$:
\begin{property}[$P_{a,b}^p$]~
  \begin{enumerate}
  \item $\Phi = e^a u $ on $]0,a]$ and  $u\in \mathcal H^{a,p}$.
  \item On $[a,t - b  ]$, we take  $\Phi(s) =e^{s}$, so
  $|\Phi'|^p(s) -\Phi(s)^p=0\,.$
  \item  On $[t-  b  , t]$ we take $\Phi(s)=e^{t-b }\theta (t-s)$ with $ \theta \in \mathcal G^{b,p}$ \,.
  \end{enumerate}
  \end{property}
   Hence, we have 
  $$
  {\rm supp\,} ( \Phi^p -|\Phi'|^p)_+\subset [ t-b , t]\,.
  $$
 
 Similarly we assume that $\Psi$ satisfies property $(P_{b,a}^q)$ but
 with $\theta=1$,  hence
 \begin{enumerate}
 \item 
 $
 \Psi(s) = e^{b} v(s) \mbox{ on } ]0,b[  \mbox{ with }  v  \in \mathcal H^{b,q}\,,
 $ 
\item    On $[b,t-a  ]$, we take  $\Psi(s) =e^{s}$.
\item On $[t-a ,t ]$,  $\Psi (s) = e^{t-a}\,.$
\end{enumerate}

 Recalling the definition of $\iota_t$, we get for $\iota_t\Psi$:
  \begin{enumerate}
  \item On $[0,a]$, $\iota_{t} \Psi= e^{(t- a )}$, satisfying 
  $$(\iota_t \Psi) '^q -(\iota_t \Psi)^q =  - e^{q(t-a)}\,.$$
  \item On $[a,t -  b  ]$, we have  $\iota_t \Psi (s) =e^{ t-s}$, hence 
  $$|(\iota_t \Psi)'(s)|^q -\iota_t \Psi(s)^q=0\,.$$
  \item  On $]t-   b  , t[$, we have 
  $$|(\iota_t \Psi) '(s)|^q -(\iota_t \Psi)^q(s) \geq  0 \,.
  $$
  \end{enumerate}
 Assuming that $t>a + b $, we have  under these assumptions on $\Phi$ and $\Psi$
$$  
  \{ s;\Phi(s)^p-|\Phi '(s)|^p >0,\iota_t \Psi(s)^q -|\iota_t \Psi '(s)|^q < 0  \} \subset [ t-b,b]\,.
  $$
 We now compute or estimate the various quantities appearing in \eqref{LL.5bis}.\\

We have
\begin{equation}\label{eq:Iphi}
  \| (\Phi^p-|\Phi'|^p)^{\frac{1}{p}}_-\,m\| = e^{a}\left(  \int_0^a
    (|u'(s)|^p - u^p(s)) m(s)^p ds
  \right)^{1/p}\,,
\end{equation}
\begin{equation}\label{eq:Ipsi}
\|(\Psi^q-|\Psi '|^q)^{\frac{1}{q}}_-m\| =  e^{b}\left( \int_0^{b }
  (|v'(s)|^q - v(s)^q) m(s)^q ds\right)^{1/q}
  \,,
\end{equation} 
and  
\begin{equation}\label{eq:Ichi}
\begin{split}
&\hskip -1cm\int_0^t(\Phi^p-\Phi'^p)^{\frac{1}{p}}_+ ((\iota_t \Psi)^q- (\iota_t \Psi')^q )^{\frac{1}{q}}_-ds \\ 
&=  \int_{t-b} ^t(\Phi^p-|\Phi'|^p)^{\frac{1}{p}}_+ ((\iota_t \Psi)^q-
  |\iota_t \Psi'|^q)^{\frac{1}{q}}_-ds \\ 
 &= e^{t-b } \int_{t-b }^t   (\theta(t-s) ^p- |\theta'(t-s)|^p)_+^{\frac 12}  \,   ((\iota_t \Psi)^q- |\iota_t \Psi'|^q)^{\frac{1}{q}}_- ds
\\ 
& = 
  e^{t} \int_{0}^{b }   (\theta (s) ^p- |\theta'(s)|^p)^\frac 1p (|v'(s)|^q - v (s)^q)^{\frac{1}{q}} ds \,.
  \end{split}
\end{equation}
So we get from \eqref{LL.5bis}
\begin{equation}\label{LL.5z}
 || e^t  S (t)|| _{\mathcal L(\cal B)}\le  e^{a+ b}   \left (\int_0^a (|u'(s)|^p - u^p(s)) m(s)^p ds\right)^\frac 1p \, K^p (b,\theta,v) \,,
\end{equation}
where 
\begin{equation}\label{Kbthetav}
 K^p (b,\theta,v) := \frac{ \left(\int_0^{b } (|v'(s)|^q - v(s)^q) m(s)^q
     ds\right)^{1/q} }{ \int_{0}^{b}   (\theta (s)^p -| \theta'(s)|^p)^\frac 1p  (|v'(s)|^q - v (s)^q)^{\frac{1}{q}} ds }\,.
 \end{equation}
 We start by considering for a given $\theta \in \mathcal G^{b,p}$
 $$
K_{\mathrm{inf}}^p (b,\theta) := \inf_{ v \in \mathcal H^{b,q}} K^p(b,\theta ,v)\,,
 $$
and  get  the following:
  \begin{lemma}\label{prop3.12} If $\theta\in \mathcal G^{b,p}$ and $\theta
    -\theta '$ is not identically $0$ on $]0,b$[, we have
\begin{equation}\label{idkinfbtheta}
K^p_{\mathrm{inf}} (b,\theta)  = \frac{1} {\big( \int_0^{b }   (\theta (s) ^p- |\theta'(s)|^p) \frac {1}{ m^p} ds\big)^{1/p}  } \,.
\end{equation}
\end{lemma}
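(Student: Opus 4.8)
The plan is to reduce this variational problem to a one–dimensional Hölder inequality. I would write $h:=(\theta^p-|\theta'|^p)^{1/p}\ge 0$ (this makes sense since $\theta\in\mathcal G^{b,p}$ gives $|\theta'|\le\theta$), and for $v\in\mathcal H^{b,q}$ set $g:=(|v'|^q-v^q)^{1/q}\ge 0$, which is legitimate because $0\le v\le v'$ forces $|v'|^q-v^q\ge 0$. With this notation, \eqref{Kbthetav} becomes
$$
K^p(b,\theta,v)=\frac{\|m\,g\|_{L^q(]0,b[)}}{\displaystyle\int_0^b h\,g\,ds}\,,
$$
so the task is to minimise the right-hand side over the functions $g$ that arise in this way. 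One first notes that $g\not\equiv 0$ for any admissible $v$: otherwise $v'=v$ together with $v(0)=0$ would give $v\equiv 0$, contradicting $v(b)=1$. One may also assume $h\not\equiv 0$, for if $h\equiv 0$ the denominator above vanishes for every $v$ and both sides of \eqref{idkinfbtheta} equal $+\infty$.

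For the lower bound I would simply apply Hölder's inequality with exponents $p$ and $q$,
$$
\int_0^b h\,g\,ds=\int_0^b \frac{h}{m}\,(m\,g)\,ds\le \Big\|\frac{h}{m}\Big\|_{L^p(]0,b[)}\,\|m\,g\|_{L^q(]0,b[)}\,,
$$
which gives $K^p(b,\theta,v)\ge 1/\|h/m\|_{L^p(]0,b[)}$ for every admissible $v$ (the estimate being trivial when the denominator vanishes). Since $\|h/m\|_{L^p(]0,b[)}^{p}=\int_0^b(\theta^p-|\theta'|^p)\,m^{-p}\,ds$, taking the infimum over $v$ yields $K^p_{\mathrm{inf}}(b,\theta)\ge 1/\|h/m\|_{L^p(]0,b[)}$, i.e.\ one half of \eqref{idkinfbtheta}.

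The substantive step is the matching upper bound, that is, producing $v\in\mathcal H^{b,q}$ that turns Hölder into an equality. Equality forces $g$ to be proportional to $g_0:=h^{p/q}\,m^{-1-p/q}$, which lies in $L^\infty(]0,b[)$ (because $\theta^p-|\theta'|^p\le\theta^p$ is bounded, $\theta$ being absolutely continuous on $[0,b]$, while $m$ is continuous and positive on the compact $[0,b]$) and is not identically $0$ by the reduction above. To realise it, I would solve the Cauchy problem
$$
v'=\big(v^q+g_0^q\big)^{1/q}\,,\qquad v(0)=0\,,
$$
on $[0,b]$. As a function of $v\ge 0$ the right-hand side is nondecreasing and $1$-Lipschitz — indeed $\partial_v(v^q+c)^{1/q}=\big(v^q/(v^q+c)\big)^{(q-1)/q}\in[0,1]$ uniformly in the parameter $c\ge 0$ — and it is bounded and measurable in $s$; hence Carathéodory's theorem gives a unique absolutely continuous solution on all of $[0,b]$, with no blow-up since $v'\le v+\|g_0\|_{L^\infty}$. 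This solution satisfies $v(0)=0$, $v'\ge(v^q)^{1/q}=v\ge 0$ (hence $0\le v\le v'$), and $v(b)\ge\int_0^b g_0\,ds>0$. Replacing $v$ by $v/v(b)$ — which multiplies $g_0$ by the same positive constant and leaves the scale-invariant ratio $K^p(b,\theta,\cdot)$ unchanged — produces a genuine element of $\mathcal H^{b,q}$ for which $(|v'|^q-v^q)^{1/q}$ is a positive multiple of $g_0$. For this $v$ one has $(m\,g)^q$ proportional to $(h/m)^p$, so Hölder holds with equality and $K^p(b,\theta,v)=1/\|h/m\|_{L^p(]0,b[)}$; combined with the lower bound this is \eqref{idkinfbtheta}.

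I expect the reconstruction step to be the only real obstacle: one must check that the nonlinear ODE is well posed on the whole of $[0,b]$ and that, after the harmless normalisation $v(b)=1$, the solution still belongs to the admissible class $\mathcal H^{b,q}$, i.e.\ still satisfies $0\le v\le v'$. Both are controlled by the Lipschitz bound for $v\mapsto(v^q+c)^{1/q}$ and by the inequality $v'\ge v$ built into the equation; everything else is the equality case of Hölder's inequality, in the spirit of the analogous computations in \cite{HelSj21}.
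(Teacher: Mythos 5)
Your proof is correct and follows essentially the same route as the paper's: the Hölder lower bound after writing the denominator as $\int_0^b (h/m)\,(mg)\,ds$, then construction of the optimizer by solving $v'=(v^q+g_0^q)^{1/q}$, $v(0)=0$, and normalising by $v(b)>0$. If anything, your version is slightly more careful than the paper's on the well-posedness of the ODE (Carath\'eodory rather than Cauchy--Lipschitz, since the dependence on $s$ is only measurable) and on the degenerate case $h\equiv 0$.
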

\begin{proof}
We consider
with $$h(s)=  \big(\theta (s) ^p- |\theta'(s)|^p\big) ^\frac 1p  \geq 0 $$ the
denominator in (\ref{Kbthetav}), 
$$
\int_0^b h(s)  (|v'(s)| ^q - v(s)^q)^{\frac{1}{q}} ds =\int_0^b \big( h(s)/m(s) \big) \,\big(   m^q(|v'(s)| ^q- v(s)^q)^{\frac{1}{q}}\big) \,ds  \,.
$$
By the H\"older inequality, we have
\begin{multline*}
\int_0^b h(s) (|v'(s)|^q - v(s)^q)^{\frac{1}{q}} ds \leq \\ \left(
  \int_0^b ( h(s)/m(s) )^pds\right)^\frac{1}{p} \, \left(\int_0^b m(s)^q(|v'(s)|^q -
  v(s)^q) ds \right) ^\frac 1q \,,
\end{multline*}
which implies that $K^p_{\mathrm{inf}}(b,\theta )$ is
  bounded from below by the right hand side of (\ref{idkinfbtheta}).

 We have equality for some $ v$ in $\mathcal H^{b,q}$ if and only if,  for some constant $c>0$, 
$$
 m(s)^q (|v'(s)|^q - v(s)^q) = c \,  h(s)^p/m(s)^p  \,.
 $$
 In order to get such a $v$,
we first consider $w \in W^{1,p}$ defined by
$$
w' = ( w^q  + h^pm^{-pq})^{1/q}\,,\, w(0)=0\,,
$$
noticing that the right hand side of the differential
  equation is Lipschitz continuous in $w$, so that the
  Cauchy-Lipschitz theorem applies.
According to our assumption on $\theta$, we verify that $w(b) >0$ 
 and we choose
 $$
v = \frac{1}{w(b)} w\,,\,c = \frac{1}{w(b) }\,.
$$
For this pair $(c,v)$ we get
\begin{equation}\begin{split}
&\left(\int_0^{b } (|v'(s)|^q - v(s)^q) m(s)^q ds\right)^\frac 12 \Big/
  \int_{0}^{b }  \left(\theta (s) ^p- |\theta '(s)|^p \right) ^\frac 12
  \left(|v'(s)|^q - v(s)^q\right)^{\frac{1}{q}} ds\\
  &=1 \big/\left( \int_0^b h^p(s)  m^{-p}(s)  \, ds \right)^\frac 1p   \,.
\end{split}
\end{equation}
Returning to the definition of $h$ shows that
  $K^p_\mathrm{inf}(b,\theta )$ is bounded from above by the right hand
  side of (\ref{idkinfbtheta}) and we get the announced result.
\end{proof}
 
 To conclude the proof of  Proposition \ref{propminmaxp}, we just
   combine Lemma \ref{prop3.12}  and   \eqref{LL.5z}.

\subsection{Application of Proposition \ref{propminmaxp}: Wei's theorem}
When $p=2$, we can directly apply \cite{HelSj21} where $r(\omega)$ is replaced by $\hat r_2(\omega)$. The case $p\neq 2$ is less clear. Already, in the case $m=1$ this involves new questions related to the $p$-Laplacian instead of the Laplacian. Nevertheless, one can get (probably non optimal)  upper bounds by using the optimizers obtained for $p=2$. If we consider $m=1$,  \eqref{eq:optprob}, $a \leq \frac \pi 4$, $b \leq \frac \pi 4$ and
take therein $ u(s) = \sin s/\sin a$ and $\theta (s) =  \cos s/\cos b$, we obtain 
 \begin{equation}\label{eq:optprobapp}
 || S(t)|| \leq \cos b \sin a^{-1} \exp -( t -a - b ) \,\frac{ \left(\int_0^a
      ((\cos s)^p-(\sin s)^p) ds\right)^{1/p}  }{  \left(\int_0^{b }( (\cos s)^p-(\sin s)^p) \,ds \right)^{1/p} } \,,
 \end{equation}
  When $a=b$, we obtain
   \begin{equation}\label{eq:optprobapp}
 || S(t)|| \leq \cot a  \exp -( t -2a) \,.
 \end{equation}
 For $a=b=\frac \pi 4$ we get an extension of Wei's theorem to the reflexive Banach case.
 \begin{theorem}\label{th3.2p}
 Let $p>1$ and $S(t)$ a $C_0$-semigroup of generator $A$ in a reflexive Banach space $\mathcal B$
  such that 
   \begin{equation}\label{eq:h1w}
\|S(t)\| \leq 1  \mbox{ for all } t \geq 0\,.
\end{equation}
 holds and $\hat r_p(0) >0$.  Then we have, 
\begin{equation}\label{eq:w}
||S(t) || \leq e^{- \hat r_p(0)t +\frac \pi 2}\,,\, \forall t\geq 0\,.
\end{equation}
\end{theorem}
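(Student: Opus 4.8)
The plan is to obtain \eqref{eq:w} as a quantitative consequence of Proposition~\ref{propminmaxp}, after first normalizing $\hat r_p(0)$ to $1$ by a time dilation. Set $\rho:=\hat r_p(0)>0$ and $T(t):=S(t/\rho)$; this is a $C_0$-semigroup on the same reflexive space $\mathcal B$, with generator $\rho^{-1}A$, still satisfying $\|T(t)\|\le 1$. The substitution $s=\rho\sigma$, $t=\rho\tau$ in \eqref{eq:Ka} gives $(\mathcal K^+_T u)(\rho\tau)=\rho\,(\mathcal K^+_S v)(\tau)$ with $v(\sigma):=u(\rho\sigma)$, so that $\|\mathcal K^+_T u\|_{L^p(\mathbb R_+;\mathcal B)}=\rho^{1+1/p}\|\mathcal K^+_S v\|_{L^p}$ while $\|u\|_{L^p}=\rho^{1/p}\|v\|_{L^p}$; taking the supremum over $u\ne 0$ (equivalently over $v\ne 0$) yields $K_{0,p}(T)=\rho\,K_{0,p}(S)=\rho\cdot\rho^{-1}=1$ by \eqref{eq:K}. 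Thus the constant $\hat r_p(0)$ attached to $T$ equals $1$ (and $K_{0,p}(T)$ is finite), so it suffices to prove $\|T(t)\|\le e^{\pi/2-t}$ for all $t\ge 0$ and then substitute $t\mapsto\rho t$, using $S(t)=T(\rho t)$.

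\textbf{The normalized estimate.} Assume now $\hat r_p(0)=1$, $m\equiv 1$, $\omega=0$, and apply Proposition~\ref{propminmaxp} with $a=b=\frac{\pi}{4}$, so the hypothesis $t\ge a+b$ becomes $t\ge\frac{\pi}{2}$. I would bound the infimum in the numerator of \eqref{eq:optprob} from above by inserting the competitor $u(s)=\sqrt2\sin s$, and bound the supremum in the denominator from below by inserting $\theta(s)=\sqrt2\cos s$. Both are admissible on $]0,\frac{\pi}{4}[$: indeed $u(0)=0$, $u(\frac{\pi}{4})=1$ with $0\le u\le u'$, and $\theta(\frac{\pi}{4})=1$ with $|\theta'|\le\theta$, all because $\tan s\le 1$ there. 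Since $\cos s\ge\sin s\ge 0$ on this interval, both integrands reduce to $2^{p/2}\big((\cos s)^p-(\sin s)^p\big)$, so the numerator is at most $\sqrt2\,J^{1/p}$ and the denominator at least $\sqrt2\,J^{1/p}$, where $J:=\int_0^{\pi/4}\big((\cos s)^p-(\sin s)^p\big)\,ds>0$; hence their quotient is $\le 1$. The surviving prefactor is $e^{-(t-a-b)}=e^{\pi/2-t}$ (equivalently, the factor $\cot\frac{\pi}{4}=1$). So $\|S(t)\|\le e^{\pi/2-t}$ for $t\ge\frac{\pi}{2}$, while for $0\le t<\frac{\pi}{2}$ the estimate is trivial since $\|S(t)\|\le 1\le e^{\pi/2-t}$. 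Undoing the rescaling of the first step, $\|S(t)\|=\|T(\rho t)\|\le e^{\pi/2-\rho t}=e^{-\hat r_p(0)t+\pi/2}$, which is \eqref{eq:w}.

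\textbf{Main obstacle.} The only step that is not bookkeeping is the scaling identity $K_{0,p}(T)=\rho\,K_{0,p}(S)$: in the Banach setting there is no Plancherel shortcut, so it has to be read off directly from the integral kernel of $\mathcal K^+$, with attention to the way the Jacobian factors in the numerator and denominator of the operator-norm quotient combine into a single power of $\rho$. Everything else --- the admissibility of the trigonometric test functions and the cancellation of the two integrals when $a=b$ --- is elementary, but it is precisely the pointwise constraints $0\le u\le u'$ and $|\theta'|\le\theta$, i.e.\ $\tan s\le 1$, that pin the cutoff at $\frac{\pi}{4}$ and thereby fix the additive constant $\frac{\pi}{2}$ in \eqref{eq:w}.
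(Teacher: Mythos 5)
Your proof is correct and follows essentially the same route as the paper: Theorem \ref{th3.2p} is obtained from Proposition \ref{propminmaxp} with $m\equiv 1$, $a=b=\tfrac{\pi}{4}$, by inserting the $p=2$ optimizers $u(s)=\sin s/\sin a$ and $\theta(s)=\cos s/\cos b$ as test functions, the two integrals of $(\cos s)^p-(\sin s)^p$ cancelling so that only the factor $e^{-(t-\pi/2)}$ survives. Your additional steps --- the explicit dilation argument showing $K_{0,p}(S(\cdot/\rho))=\rho\,K_{0,p}(S)$ to normalize $\hat r_p(0)=1$, and the trivial bound for $t<\tfrac{\pi}{2}$ --- are left implicit in the paper but are correct as you state them.
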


  \section{Modified Riccati equation and application to the optimization problems.}
When analyzing the optimality of the statements in \cite{HelSj21}, an important tool
 was a fine analysis of a natural Riccati equation (see more precisely Subsection~3.4.2 in \cite{HelSj21}). Let us see what is going on for $p\in (1,+\infty)$ and we start with the assumption that
 $$
 \hat r_p(0)=1\,.
 $$\\
 Let $f$ be defined on $]\sigma ,\tau [$  such
that
\begin{equation}\label{mh.8}0<f\le f'.\end{equation}
Put $$\mu =m'/m\,.$$
We now assume that $f$ satisfies  $$(\partial _s\circ m^p\circ \partial
_s+m^p)f^{p-1}=0\,,$$
we get
\begin{equation}\label{mh.9}
(\partial _s^2+p \mu \partial _s+1)f^{p-1}=0\,.
\end{equation}
In this case, we say that $f$ is $(m,p)$-harmonic.\\
Writing $$\phi =\log  f \mbox{ and }  \psi =\phi '=f'/f\,,$$
we get, noting that
$$
f''/f= \psi' +\psi^2\,,\,  \psi \ge 1 
$$
and 
\begin{equation}\label{mh.10}
    \psi '=-((p-1)\psi ^2+p\mu \psi +\frac{1}{p-1})\,,
\end{equation}
or equivalently
\begin{equation}\label{mh.10bis}
   \frac{ \psi '}{\psi} =-((p-1)\psi +p\mu  +\frac{1}{(p-1)\psi })\,.
\end{equation}
As in \cite{HelSj21}, we note that $\tilde \psi := - 1/\psi$
satisfies
\begin{equation}\label{mh.10ter}
   \frac{ \tilde \psi '}{\tilde \psi} =-((p-1)\tilde \psi - p\mu  +\frac{1}{(p-1)\tilde \psi })\,.
\end{equation}

We consider the condition 
\begin{equation}\label{mh.10p}
\lim_{x \mapsto 0^+}\psi_p(x) = +\infty\,.
\end{equation}
With the conditions \eqref{mh.10} and \eqref{mh.10p}, $\psi=\psi_{m,p}$ is uniquely defined and 
and we introduce 
 \begin{equation}\label{defa*}
a^*=a^*(m,p)= \sup \{ a>0 \mbox{ and } \psi_{m,p} >1 \mbox{ on }(0,a)\}\,,
\end{equation}
so that $a ^*(m,p)\in ]0,+\infty ]$.\\
Following \cite{HelSj21}, one can prove that in \eqref{eq:defIp}  the infimum is realized by  the $(m,p)$-harmonic function  $u_p$ such that $u'_p/u_p=\psi_p$.\\
Hence we get with in mind that in \eqref{eq:defIp} the infimum is for $u\in \mathcal H^{a,p}$
 \begin{equation*}
 I_{inf,p} = \int_0^a
    ( m(s)^p u_p'(s)^p-u_p^p(s)) ds
     \end{equation*}
After an integration by parts, we get (since $u_p(0)=0$ and $u_p(a)=1$)
 \begin{equation*}
 I_{inf,p} := m(a)^p (u_p'(a))^{p-1} (a) u_p^{p}(a) = m(a)^p \psi_p^{p-1} (a)\,.
     \end{equation*}
Similarly  one can prove that in \eqref{eq:defJp}  the supremum is realized by  the $(1/m,p)$-harmonic function  $\theta_p\in \mathcal G^{b,p}$ such that $\theta'_p/\theta_p=- 1/\psi_p$.\\
Hence we get, with in mind that in \eqref{eq:defJp} the infimum is for $\theta \in \mathcal G^{b,p}$ satisfying $\theta'(0)=0$, 
  \begin{equation}
      \label{eq:calJp}
      J_{sup,p}= \int_0^{b } \frac{1}{m(s)^p}  (\theta_p(s)^p-|\theta_p'(s)|^p) \,ds  
      \end{equation}
      After an integration by parts, we get (since $\theta'_p(0)=0$ and $\theta_p(b)=1$)
 \begin{equation*}
 J_{sup,p} =- m(b)^{-p} |\theta_p'(b)|^{p-1} (a) \theta_p^{p}(b) = m(b)^{-p} \psi_p^{-(p-1)} (b)\,.
     \end{equation*}
   
\section{Final theorem} 
Like in \cite{HelSj21} and coming back to Proposition \ref{propminmaxp}, we immediately get from the previous section:
    \begin{proposition}\label{prop4.9} Let $p>1$, $\omega =0$, $\hat r_p(0)=1$ and  $a^*:=a^*(m,p)\in ]0,+\infty]$. When
    $a,b\in ]0,+\infty [\cap ]0,a^*]$
  and $t> a+b$, we have 
 \begin{equation}
 || e^t S(t)|| \leq \exp (a +b ) m(a) m(b) \psi_p(a)^\frac{p-1}{p} \psi_p (b)^\frac{p-1}{p}  \,.
\end{equation}
In particular, when $a^*<+\infty $, we have 
 \begin{equation}\label{eq:thintro}
 || e^t S(t)|| \leq \exp (2 a^{*})   \, m(a^{*} )^2 \,,\ \ t>2 a^*\,.
 \end{equation}
\end{proposition}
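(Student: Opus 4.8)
The plan is to obtain Proposition~\ref{prop4.9} directly from Proposition~\ref{propminmaxp}, by substituting into it the closed-form values of the two variational quantities $I_{inf,p}$ and $J_{sup,p}$ that were produced by the analysis of the modified Riccati equation in the previous section.

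First I would invoke Proposition~\ref{propminmaxp}: since $\omega=0$, $\hat r_p(0)=1$ and $t>a+b$, it gives
$$
\|S(t)\|\le e^{-(t-a-b)}\,\frac{I_{inf,p}^{\,1/p}}{J_{sup,p}^{\,1/p}},
$$
with $I_{inf,p}$, $J_{sup,p}$ as in \eqref{eq:defIp}, \eqref{eq:defJp}. Then I would feed in the identifications established just before (following the pattern of Section~3 of \cite{HelSj21}). When $a\le a^*(m,p)$ the logarithmic derivative $\psi_p=\psi_{m,p}$, solution of the Riccati equation \eqref{mh.10} normalised by \eqref{mh.10p}, satisfies $\psi_p\ge 1$ on $]0,a[$, so the $(m,p)$-harmonic function $u_p$ with $u_p'/u_p=\psi_p$, $u_p(0)=0$, rescaled so that $u_p(a)=1$, lies in the admissible class $\mathcal H^{a,p}$, realises the infimum, and an integration by parts using $(m,p)$-harmonicity together with $u_p(0)=0$, $u_p(a)=1$ yields $I_{inf,p}=m(a)^p\psi_p(a)^{p-1}$. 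Symmetrically, for $b\le a^*$ the $(1/m,p)$-harmonic $\theta_p$ with $\theta_p'/\theta_p=-1/\psi_p$, $\theta_p(b)=1$, $\theta_p'(0)=0$ lies in $\mathcal G^{b,p}$, realises the supremum, and $J_{sup,p}=m(b)^{-p}\psi_p(b)^{-(p-1)}$. Substituting and simplifying,
$$
\frac{I_{inf,p}^{\,1/p}}{J_{sup,p}^{\,1/p}}=m(a)\,m(b)\,\psi_p(a)^{\frac{p-1}{p}}\,\psi_p(b)^{\frac{p-1}{p}},
$$
and multiplying the first displayed inequality by $e^t$ gives the first assertion, valid for $a,b\in\,]0,a^*]$ and $t>a+b$.

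For the second assertion, assume $a^*<+\infty$ and put $a=b=a^*$ (legitimate since $a^*\in\,]0,a^*]$). By \eqref{defa*} one has $\psi_p>1$ on $]0,a^*[$, hence $\psi_p(a^*)\ge 1$ by continuity; and $\psi_p(a^*)>1$ is impossible, since then $\psi_p>1$ would persist past $a^*$, contradicting the maximality in \eqref{defa*}. Therefore $\psi_p(a^*)=1$, the factors $\psi_p(a^*)^{(p-1)/p}$ equal $1$, and the first assertion collapses to $\|e^tS(t)\|\le e^{2a^*}m(a^*)^2$ for $t>2a^*$.

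The substitution itself is routine; the real content — already carried out in the previous section — is the variational step, and that is where I expect the main difficulty to lie if one wants a self-contained argument: proving that the $(m,p)$- and $(1/m,p)$-harmonic functions are genuine global extremisers over the constrained classes $\mathcal H^{a,p}$ and $\mathcal G^{b,p}$, using convexity of the $p$-homogeneous integrands, verifying that the one-sided constraints $0\le u\le u'$ and $|\theta'|\le\theta$ are inactive on $]0,a[$ and $]0,b[$ precisely because $a,b\le a^*$ (so that the positive part in \eqref{eq:defIp} may be dropped), and then performing the integration by parts. The continuity of $\psi_p$ up to $a^*$ with $\psi_p(a^*)=1$ is comparatively minor, following from local well-posedness of \eqref{mh.10} away from the singularity at $s=0$ together with the a priori bound $\psi_p\ge 1$.
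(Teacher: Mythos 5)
Your proposal is correct and is essentially the paper's own argument: the paper deduces Proposition~\ref{prop4.9} ``immediately'' by inserting the values $I_{inf,p}=m(a)^p\psi_p(a)^{p-1}$ and $J_{sup,p}=m(b)^{-p}\psi_p(b)^{-(p-1)}$, obtained from the Riccati analysis of the preceding section, into Proposition~\ref{propminmaxp}, exactly as you do. Your added remarks (admissibility of $u_p$ and $\theta_p$ for $a,b\le a^*$, and $\psi_p(a^*)=1$ for the case $a=b=a^*$) are precisely the points the paper delegates to that section and to \cite{HelSj21}, so there is no divergence in method.
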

 This proposition is the analog of Wei's theorem for general  weights $m$ and the $L^p$-Banach version of Theorem 1.9  in \cite{HelSj21}.\\
 By the same rescaling  procedure, we
 have actually a more general statement.  We consider $\hat A$ with the same properties as $A$ where the hat's are introduced to make easier the transition between the particular case above to the general case below. As before, we introduce
   $\hat \omega $ and $\hat r =\hat r_p(\hat \omega)$. 
    \begin{theorem}\label{prop4.9bis} 
    Let $p>1$,  $\hat r_p(\hat \omega)< +\infty$. 
    Let $\hat S(\hat t)= e^{\hat t \, \hat A} $ satisfying $$
    || \hat S (\hat t) || \leq \hat m(\hat t)\,,\, \forall \hat t >0\,.
    $$
    Then there exist uniquely defined  $\hat a^*:=\hat a^*(\hat m, \hat \omega,\hat r,p )>0 $ and $\hat \psi_p:=\hat \psi_p(\cdot ; \hat m, \hat \omega,\hat r )$ on $]0,\hat a^*[$ with the same general properties as above  such that,
 if  $\hat a, \hat b\in ]0,+\infty [\cap ]0,\hat a^*]$
  and $\hat t> \hat a+\hat b$, we have 
 \begin{equation} 
 || S(\hat t)|| \leq \exp \left( ( \hat \omega  -  \hat r_p(\hat \omega)) (\hat t- (\hat a + \hat b )) \right)  \hat m(\hat a) \hat m(\hat b)   \hat \psi_p(\hat a)^\frac{p-1}{p} \, \hat \psi_p(\hat b )^\frac{p-1}{p}  \,.
\end{equation}
Moreover, when $ \hat a^*<+\infty $, the estimate is optimal for $\hat a=\hat b=\hat a^{*} $ and reads 
 \begin{equation}\label{eq:thintrobis}
 ||  \hat S(\hat t)|| \leq \exp ((\hat \omega   -  \hat r_p(\hat \omega)  (\hat t - 2\hat a^{*}))   \, \hat m(\hat a^*)^2 \,,\ \ t>2 \hat a^*\,.
 \end{equation}
 \end{theorem}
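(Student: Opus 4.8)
The plan is to deduce Theorem~\ref{prop4.9bis} from the normalized Proposition~\ref{prop4.9} by the rescaling announced before its statement. Set $\hat r:=\hat r_p(\hat\omega)$; since $\hat r_p(\hat\omega)<+\infty$ and $\mathcal K^+$ is never the zero operator for a nontrivial $C_0$-semigroup, $\hat r\in\,]0,+\infty[$. Introduce
\[
A:=\hat r^{-1}(\hat A-\hat\omega),\qquad S(t):=e^{tA}=e^{-\hat\omega t/\hat r}\,\hat S(t/\hat r)\ \ (t\ge 0),\qquad m(t):=e^{-\hat\omega t/\hat r}\,\hat m(t/\hat r),
\]
so that $S$ is a $C_0$-semigroup with generator $A$ and $\|S(t)\|\le m(t)$ by the hypothesis $\|\hat S\|\le\hat m$. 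The constructions of the preceding section (the variational problems \eqref{eq:defIp}, \eqref{eq:defJp}, the Riccati equation \eqref{mh.10} with $\mu=m'/m$, the function $\psi_p=\psi_{m,p}$, the critical length $a^*=a^*(m,p)$) will all be run for the pair $(A,m)$, and $\hat\psi_p$, $\hat a^*$ recovered afterwards by undoing the dilation.

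The crucial preliminary is the normalization $\hat r_p(0;A)=1$. Here $K_{0,p}(A)=\|\mathcal K^+_A\|_{\mathcal L(L^p(\mathbb R_+;X))}$, with $\mathcal K^+_A$ convolution by $t\mapsto S(t)=e^{-\hat\omega t/\hat r}\hat S(t/\hat r)$. The substitutions $t=\hat r\sigma$, $s=\hat r\rho$ give $\mathcal K^+_A=\hat r\,U_{\hat r}^{-1}\widetilde{\mathcal K}^+U_{\hat r}$, where $U_{\hat r}u:=\hat r^{1/p}\,u(\hat r\,\cdot)$ is an isometry of $L^p(\mathbb R_+;X)$ and $\widetilde{\mathcal K}^+$ is convolution by $t\mapsto e^{-\hat\omega t}\hat S(t)$; since multiplication by $e^{-\hat\omega\cdot}$ intertwines $\widetilde{\mathcal K}^+$ on $L^p$ isometrically with the operator \eqref{eq:Ka} for $\hat S$ on $L^p_{\hat\omega}$, we get $\|\widetilde{\mathcal K}^+\|_{\mathcal L(L^p)}=K_{\hat\omega,p}(\hat A)=1/\hat r$ by \eqref{eq:K}. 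Conjugation by the isometry $U_{\hat r}$ leaves the operator norm unchanged, so $\|\mathcal K^+_A\|_{\mathcal L(L^p)}=\hat r\cdot(1/\hat r)=1$.

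With $\hat r_p(0;A)=1$ the preceding section applies directly to $(A,m)$ and yields $\psi_p$ and $a^*\in\,]0,+\infty]$; one defines $\hat\psi_p(\hat t):=\psi_p(\hat r\hat t)$ and $\hat a^*:=a^*/\hat r$. Since $\mu(\hat r\hat t)=\hat r^{-1}(\hat\mu(\hat t)-\hat\omega)$ with $\hat\mu=\hat m'/\hat m$, a change of variables in \eqref{mh.10} shows that $\hat\psi_p$ solves the Riccati equation attached to the data $(\hat m,\hat\omega,\hat r)$, is $>1$ exactly on $]0,\hat a^*[$, satisfies $\hat\psi_p(0^+)=+\infty$ (cf.\ \eqref{mh.10p}) and is uniquely determined by these requirements; this gives the existence and uniqueness of $\hat\psi_p$, $\hat a^*$ with the asserted properties. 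Applying Proposition~\ref{prop4.9} to $(A,m)$ with $a:=\hat r\hat a$, $b:=\hat r\hat b$, $t:=\hat r\hat t$ — so that $\hat a,\hat b\in\,]0,\hat a^*]$ becomes $a,b\in\,]0,a^*]$ and $\hat t>\hat a+\hat b$ becomes $t>a+b$ — gives
\[
\|e^{t}S(t)\|\le e^{a+b}\,m(a)\,m(b)\,\psi_p(a)^{\frac{p-1}{p}}\psi_p(b)^{\frac{p-1}{p}}.
\]

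It remains to translate back. From $\hat S(\hat t)=e^{\hat\omega\hat t}S(\hat r\hat t)$, $m(\hat r\hat a)=e^{-\hat\omega\hat a}\hat m(\hat a)$ (and likewise at $\hat b$) and $\psi_p(\hat r\hat a)=\hat\psi_p(\hat a)$, the last display becomes
\[
\|\hat S(\hat t)\|\le e^{\,\hat\omega\hat t-\hat r(\hat t-\hat a-\hat b)-\hat\omega(\hat a+\hat b)}\,\hat m(\hat a)\,\hat m(\hat b)\,\hat\psi_p(\hat a)^{\frac{p-1}{p}}\hat\psi_p(\hat b)^{\frac{p-1}{p}},
\]
and its exponent equals $(\hat\omega-\hat r)(\hat t-(\hat a+\hat b))$, which is exactly the asserted bound since $\hat r=\hat r_p(\hat\omega)$. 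When $\hat a^*<+\infty$ one has $a^*<+\infty$, hence $\psi_p(a^*)=1$ by \eqref{defa*} and continuity, so $\hat\psi_p(\hat a^*)=1$; specializing to $\hat a=\hat b=\hat a^*$ removes the $\hat\psi_p$-factors and gives \eqref{eq:thintrobis}. The optimality of the choice $\hat a=\hat b=\hat a^*$ transfers, again through the rescaling, to the sharpness of \eqref{eq:thintro}, which rests on the analysis of the minimizers of \eqref{eq:defIp} and the maximizers of \eqref{eq:defJp} as in \cite{HelSj21}. The step I expect to cost the most care is the normalization $\hat r_p(0;A)=1$: one must follow precisely how $\mathcal K^+$ transforms under the combined exponential twist by $\hat\omega$ and the time dilation by $1/\hat r$, and check that the dilation factors cancel so that the only surviving scalar, the Jacobian $\hat r$, is compensated by $1/\hat r=K_{\hat\omega,p}(\hat A)$; the remainder is bookkeeping with the weights $e^{-\hat\omega t/\hat r}$ in \eqref{eq:defIp}, \eqref{eq:defJp} and \eqref{mh.10}.
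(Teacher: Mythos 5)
Your proof is correct and follows exactly the paper's intended route: the paper establishes this theorem only by invoking ``the same rescaling procedure'' reducing to Proposition~\ref{prop4.9} via $A=\hat r^{-1}(\hat A-\hat\omega)$, $m(t)=e^{-\hat\omega t/\hat r}\hat m(t/\hat r)$, which is precisely what you carry out, including the normalization $\hat r_p(0;A)=1$ (via the dilation/exponential-twist conjugation of $\mathcal K^+$) that the paper leaves implicit. Your explicit relations $\hat\psi_p(\hat s)=\psi_p(\hat r\hat s;m)$ and $\hat a^*=a^*(m,p)/\hat r$ for the dilated weight $m=(e^{-\hat\omega\cdot}\hat m)(\cdot/\hat r)$ are the internally consistent form of the scaling identities stated in the paper's closing note.
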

 Note that in the statement
 $$
 \hat a^* (\hat m,\hat \omega)= \hat r \, a^* (e^{-\hat \omega \cdot}\hat m) \,,\, \hat \psi_p(\hat s;  \hat m, \hat \omega,\hat r ) = \psi_p (\hat r \hat s ;  e^{-\hat \omega \cdot}\hat m)   \,.
$$
This theorem is the $L^p$-Banach version of Theorem 1.10  in \cite{HelSj21}.\\

\end{document}